\newcommand{\n}{\noindent}
\newcommand{\vp}{\varepsilon}
\newcommand{\bb}[1]{\mathbb{#1}}
\newcommand{\cl}[1]{\mathcal{#1}}
\newcommand{\ovl}{\overline}
\theoremstyle{plain}
\newtheorem{thm}{Theorem}[section]
\newtheorem{lem}[thm]{Lemma}
\newtheorem{pro}[thm]{Proposition}
\theoremstyle{definition}
\newtheorem{dfn}[thm]{Definition}
\theoremstyle{remark}
\newtheorem{rem}[thm]{Remark}
\numberwithin{equation}{section}
\def\tilde{\widetilde}
\renewcommand{\tilde}{\widetilde}
\def\C{\bb  C}
\def\CC{\bb  C}
\def\F{\bb  F}
\def\d{\delta}
\def\CC{\bb  C}
\def\CC{\bb  C}
\def\F{\bb  F}
 \def\D{\Delta}
\def\d{\delta}
\def\CC{\bb  C}
\def\phi{\varphi}
\def\n{\noindent}
\def\nl{\nolimits}
\begin{document}

\def\C{\mathscr{C}}
\def\B{\mathscr{B}}
\def\I{\cl  I}
\def\e{\cl  E}
 
  \def\a{\alpha}
 
       \def\t{\theta}

   \title{On   $C^*$-algebras  with   Local     Lifting  Property
and Weak Expectation Property}

\author{by\\
Gilles  Pisier\footnote{ORCID    0000-0002-3091-2049}   \\
Sorbonne Universit\'e\\
and\\ Texas  A\&M  University}

\def\C{\mathscr{C}}
\def\B{\mathscr{B}}
\def\I{\cl  I}
\def\e{\cl  E}
 
  \def\a{\alpha}
 
       \def\t{\theta}
       
  \maketitle
  
\begin{abstract}   
We   give a new, somewhat simpler,  presentation of
                the author's recent construction of a non-nuclear $C^*$-algebra which has
                both the local lifting property (LLP) and the weak expectation property (WEP).
               
   \end{abstract}
  
  \medskip{MSC (2010): 46L06, 46L07, 46L09} 
  
    \medskip

  \def\L{\cl L}
  
  \section{Introduction}
  The main goal of this note is to give a hopefully  simpler 
  proof that there exist non-nuclear $C^*$-algebras with
                both the local lifting property (LLP) and the weak expectation property (WEP).
                (See \cite{158} for   operator spaces with analogous properties).
                We will obtain this goal in Theorem \ref{ty'}.  While this paper is mostly expository, the new approach produces some ``new" results, e.g. in Remarks
 \ref{ju14} and  \ref{ju15}.
                
  We start by an outline of  
  our main results.  
  
    We first introduce some basic terminology and notation.\\
  Let $A,B$ be $C^*$-algebras and let $E\subset A$ be an operator subspace.
                 Given a $C^*$-algebra $D$,  a linear map
                 $u: E \to B$ will be called $D$-nuclear  with constant $c$ if
                 $$\| Id_D \otimes u:  D\otimes_{\min} E \to D \otimes_{\max} B\|\le c.$$
                 If this holds with $c=1$ we say that $u$ is $D$-nuclear.
                 
                 This terminology is   inspired by the well known definition of
               nuclear  $C^*$-algebras:  $A$ is nuclear if the identity
               on $A$ is $D$-nuclear for all $D$.
  
  We first need some basic notation.\\
  For any $C^*$-algebra $C$ we denote
  $$ \cl L(C)=
                \ell_\infty(C)/c_0(C).$$
    For short we set $L=  \ell_\infty(C)$ and $\cl L= \cl L(C)$.          Let $Q: L \to \cl L $ denote the quotient map.
                For any $x=(x_n)\in L$ we have
                $\|Q(x)\|=\limsup\nl_{n\to \infty}\|x_n\|.$\\
  Our basic starting point is an isometric $*$-homomorphism              
                $$F: C \to \L$$
                together with a completely contractive self-adjoint lifting  
                 $f=(f_n): C \to L$. Lifting means here as usual
                 that $F=Qf$ and self-adjoint means $f(x^*)=f(x)^*$ for all $x\in C$.           
                 The existence of such an $f$ is of course a nontrivial assumption,
                 but it is guaranteed to hold if $C$ has the LP.
                 \footnote{Incidentally, when $C$ is separable, it is open whether $F$ always admits
                 a contractive lifting.}
                 Unfortunately we need to impose a more technical   condition,
                 as follows: 
                 \begin{equation}\label{cond}  
                  \text{For any f.d.}\ E \subset C \  \ \  \limsup\|{f^{-1}_n}_{|f_n(E)} \|_{cb}\le 1.
                 \end{equation}
                 This means that the coordinates of the  lifting $f=(f_n)$ are asymptotically 
                 locally almost completely isometric.
                 
                 We will associate to $F$ two $C^*$-algebras $Z\subset L$
                 and $A\subset \cl L$.
                 Let $E_n\subset C$ be f.d.s.a. subspaces
                 and let $(m(n))$ be a   sequence of integers.
                 We set
                  $$T_n={f_{m(n)}}_{|  E_{n-1} }: E_{n-1} \to E_n,$$
                  assuming   $\{E_n\}$  adjusted so that
              
                 $$\forall n\ge 0\qquad T_n(  E_{n-1})+T_n(  E_{n-1}) T_n(  E_{n-1})  \subset  E_n.$$
                 For any $k> n$ let $T_{[k,n]}=T_kT_{k-1}\cdots T_n: E_{n-1} \to E_k$ and hence also $T_{[n,n]}=T_n$.
                
 Let $Z_n\subset L$ be the set formed by
 all $x=(x_k)\in L$ such that $x_n\in E_n$      
 and $x_k=T_{[k,n+1]} (x_n)$ for all $k>n$. We set
 $$Z=\ovl{\cup_n Z_n} \subset L.$$
 Clearly $x_k=0$ for all $k\ge N$ implies $x\in Z_N$ and hence $c_0(C)\subset Z$. Moreover,
 it is easy to see that
 $Z_n     \subset  Z_{n+1}$. If each  map
  $T_{[k,n]}$  was the restriction of a $*$-homomorphism it would follow easily that
 $Z_n    Z_n  \subset  Z_{n+1}$. 
 But in our setting the $T_n$'s will only be approximately multiplicative
 and if they are so  at a sufficiently fast rate we still find
that $Z=\ovl{\cup Z_n}  $  is a $C^*$-subalgebra of $L$.     
  We then define the $C^*$-algebra $A=Q(Z)$ or equivalently   $$A= Z/c_0(C).$$              
 
 \begin{thm}\label{tchy} If $C$ has the lifting property (LP in short), 
 and if \eqref{cond} holds,  the spaces $(E_n)$ and the sequence $(m(n))$ can be selected so that $Z$ has the LP and $A$ the LLP.
 \\ Moreover, if  $D$ is a separable $C^*$-algebra for which $F: C \to \L$ is $D$-nuclear, they can be selected so that 
 in addition $A$ is   $D$-nuclear.
 \end{thm}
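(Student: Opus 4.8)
The plan is to prove all three conclusions at once, via a single diagonal choice of the pair $((E_n),(m(n)))$ carrying a summable sequence of error bounds $(\vp_n)$, $\sum_n\vp_n<\infty$. Suppose $E_{n-1}$ and $m(n-1)$ are already chosen. First I would pick $m(n)$ so large that $T_n=f_{m(n)}|_{E_{n-1}}$ is completely contractive (c.c.)---automatic, as $f$ is c.c.---almost completely isometric in the sense that $\|{T_n^{-1}}_{|T_n(E_{n-1})}\|_{cb}\le 1+\vp_n$, available from \eqref{cond} since $E_{n-1}$ is finite dimensional, and $\vp_n$-multiplicative on $E_{n-1}$, i.e. $\|T_n(ab)-T_n(a)T_n(b)\|\le\vp_n\|a\|\,\|b\|$; the last is possible because $F=Qf$ is a $*$-homomorphism, so $\limsup_k\|f_k(ab)-f_k(a)f_k(b)\|=0$ uniformly on the unit ball of $E_{n-1}$. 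I then take $E_n$ to be any f.d.s.a.\ space containing $T_n(E_{n-1})+T_n(E_{n-1})T_n(E_{n-1})$ and arranged so that $\bigcup_n E_n$ is dense in $C$. With $\sum_n\vp_n$ small the defining relations close up and $Z=\ovl{\bigcup_n Z_n}$ is a $C^*$-subalgebra of $L$, as indicated; moreover the same estimates force the ``seed'' maps $\sigma_n:E_n\to A$ (sending $x$ to the class of the $Z_n$-sequence it seeds) to be completely isometric in the limit, so that $A$ is a generalized inductive limit of the system $(E_n,T_n)$.

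For the LP of $Z$ I would argue by patching. Given a quotient $B\to B/J$ and a completely contractive $u:Z\to B/J$, I use the LP of $C$ to lift the pieces of $u$ supported on the finite-dimensional building blocks $E_k$, and then assemble a global completely contractive lifting on $\ovl{\bigcup_n Z_n}$. Here the nested structure $Z_n\subset Z_{n+1}$, the inclusion $c_0(C)\subset Z$ (on which lifting is unobstructed), and the summability $\sum_n\vp_n<\infty$ are what guarantee that the successive partial liftings stay close enough to converge to a lift of $u$.

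For the LLP of $A=Q(Z)$---the first genuinely delicate point---I exploit that LLP is local: it suffices, given a completely contractive $u:A\to B/J$, a finite-dimensional $S\subset A$ and $\eta>0$, to lift $u|_S$ completely contractively up to $\eta$. Using the generalized-inductive-limit picture above, I approximate $S$ in cb-norm, to within $\vp_n$, by $\sigma_n(E_n)$; I pull $u$ back through $\sigma_n$ to a completely contractive map $E_n\to B/J$, invoke the LP (hence LLP) of $C$ to lift it to $B$, and transport the lifting back to $S$ through $\sigma_n$, which is completely isometric up to $\vp_n$. A routine perturbation argument then removes the error as $\vp_n\to0$. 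What makes the transport legitimate is precisely that \eqref{cond} controls $\|\sigma_n^{-1}\|_{cb}$ uniformly and that the $\vp_n$-multiplicativity keeps the $C^*$-structures of $S\subset A$ and of $E_n\subset C$ comparable.

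Finally, the $D$-nuclearity of $A$, which I expect to be the main obstacle because it is a statement about the $\max$ tensor norm, where approximate $*$-homomorphisms are hazardous. Since $D$ is separable, I fold the requirement into the same induction: fixing a dense sequence of finitely supported tensors over a dense subset of $D$, at stage $n$ I additionally demand that $m(n)$ be large enough that $f_{m(n)}$ realizes the hypothesis $\|Id_D\otimes F:D\otimes_{\min}C\to D\otimes_{\max}\cl L\|\le1$ on the relevant finite set up to $\vp_n$. The outcome is an asymptotic factorization of $Id_D\otimes Id_A$ through $Id_D\otimes F$: on the $\min$ side nothing is lost, because $\otimes_{\min}$ is injective and the seed maps $\sigma_n$ are asymptotically completely isometric, while on the $\max$ side the $\vp_n$-multiplicativity makes the chosen representatives asymptotically multiplicative, so the $C^*$-norm on $D\otimes_{\max}A$ is dominated by the $D\otimes_{\max}\cl L$-norm furnished by the $D$-nuclearity of $F$. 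Summing the errors gives $\|Id_D\otimes Id_A:D\otimes_{\min}A\to D\otimes_{\max}A\|\le1$, i.e. $A$ is $D$-nuclear. The real work, and the crux of the whole argument, is the bookkeeping showing that all of these demands---the $C^*$-closure of $Z$, the cb-control needed for LP and LLP, and the $\max$-norm control needed for $D$-nuclearity---can be met simultaneously by one sequence $(m(n))$ with $\sum_n\vp_n<\infty$.
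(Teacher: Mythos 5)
Your construction step and the min-norm side are fine: choosing $m(n)$ so that $T_n=f_{m(n)}|_{E_{n-1}}$ is an $\vp_n$-morphism and almost completely isometric (using \eqref{cond} and the finite dimensionality of $E_{n-1}$) reproduces the admissibility part of Proposition \ref{pch2}, hence Propositions \ref{pch1} and \ref{pch10}. The genuine gap is that everything else you need --- the LP of $Z$, the LLP of $A$, and the $D$-nuclearity of $A$ --- is a statement about \emph{max} tensor norms, and nothing in your list of demands on $m(n)$ controls max norms. Pointwise $\vp_n$-multiplicativity together with complete contractivity does not dominate $\|\cdot\|_{\max}$: the max norm is computed over all representations of the algebraic tensor product, and a map that is only approximately multiplicative (and not the restriction of a $*$-homomorphism defined on an ambient $C^*$-algebra) can inflate it arbitrarily, so there is no ``summing the errors'' perturbation argument here. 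This is exactly why the paper imposes the extra conditions (iii) and (iv)/(iv)' in \S\ref{am}: at the moment $E_n$ and $\vp_n$ are chosen, one arranges (Lemmas \ref{ch2}--\ref{ch4}, a compactness argument ranging over \emph{all conceivable future} $\vp$-morphisms, which uses the LP of $C$ through the max-controllability element $t^{E}$ of Proposition \ref{ppchy}) that every $\vp_n$-morphism $\t$ on $E_n$ with values in any $C^*$-algebra satisfies $\|\t T_n\|_{MB(E_{n-1},D)}\le 1+\d_n$. Applied to $\t_n:E_n\to Z$ (admissible by (iii)), this gives \eqref{ju6} and hence the estimate \eqref{ju9} for your seed maps $\sigma_n=\a_n=Q\t_n$; that mb-estimate is precisely what must replace your unsupported sentence that ``the $\vp_n$-multiplicativity makes \dots\ the $C^*$-norm on $D\otimes_{\max}A$ dominated by the $D\otimes_{\max}\L$-norm'' (note, besides, that comparing the norm of $D\otimes_{\max}A$ with that of $D\otimes_{\max}\L$ presupposes max-injectivity of $Z\subset \ell_\infty(C)$, itself proved in Proposition \ref{pch2'} by the same machinery). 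Without an analogue of condition (iv) neither your LLP argument nor your $D$-nuclearity argument closes.

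There is a second gap in your proof of the LP of $Z$ by ``patching''. Gluing c.p.c.c.\ liftings built on the pieces into one global lifting is not a routine convergence argument: successive local liftings need not be compatible, and the claim that local liftability can always be assembled into global liftability is essentially the open problem of whether LLP implies LP. Moreover $Z_n$ is only an operator subspace of $Z$, not a $C^*$-subalgebra, so ``lifting $u|_{Z_n}$'' c.p.-ly is not even well posed; and the LP of $C$ lifts globally defined c.p.\ maps, not cb maps on f.d.\ subspaces --- the same objection applies to the lifting step inside your LLP sketch, where you lift $u\circ\sigma_n:E_n\to B/J$ as if LP of $C$ provided this. The paper does no lifting at all: it verifies the tensorial criterion for the LP from \cite{157}, namely $\|t\|_{Z\otimes_{\max}\ell_\infty(\{C_i\})}\le\sup_i\|t_i\|_{Z\otimes_{\max}C_i}$, by splitting $t\in Z_{n-1}\otimes\ell_\infty(\{C_i\})$ through the isomorphism $\Phi_n$ of Lemma \ref{deco} into a finite direct-sum part (handled by the LP of $C\oplus\cdots\oplus C$) and a tail part controlled by the mb-estimate \eqref{ju6-}; likewise the LLP of $A$ is obtained via Kirchberg's tensor characterization, factoring $A_n\subset A$ through $E_n$ with $\a_n^{-1}$ almost completely contractive (your min side, from \eqref{cond}) and $\a_n$ almost mb-contractive (the missing max side). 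So the skeleton of your induction is right, and your treatment of \eqref{che4'}-type finite-net demands for $D$-nuclearity matches the paper, but all three delicate conclusions must be re-routed through the mb-norm condition (iv) that your construction never secures.
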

 
 The proof of this theorem appears as a combination of statements in \S \ref{am}.
 See Propositions \ref{pch2} (construction of the system),  \ref{pch2'} ($Z$ has the LP), \ref{pch3} ($A$ has the LLP) and Theorem \ref{ty} (about $D$-nuclearity).  
 
 Thus if we can produce an $F$ that is $\C$-nuclear (here $\C=C^*(\F_\infty)$) on a
 $C$ with the lifting property (LP in short) then we will obtain a $C^*$-algebra $A$ 
 with both WEP and LLP. Indeed, in Kirchberg's
 viewpoint $A$ has  WEP  (resp. the  LLP) iff $Id_A$ is $\C$-nuclear (resp. $\B$-nuclear, where $\B=B(\ell_2)$). In addition if $C$ is non-exact and  we can arrange 
 (using the condition \eqref{cond}) for the finite dimensional obstructions to the exactness of $C$ to be still present in $A$,  this will guarantee that
  $A$ is not exact, and hence not nuclear. \\
  The obvious question is then: for $D=\C$ or $\B$,  how to produce nice examples of $D$-nuclear
  embeddings $F: C \to \L$  satisfying \eqref{cond} ?
 This is provided by the setting of  cone algebras, see \S \ref{ca}.

 In our initial paper \cite{155} we used only the existence of a local lifting for our $F$,
 here the use of a global lifting simplifies somewhat the  exposition.
 In particular, we will use freely the following characterization of the
 LP from \cite{157} (see also \cite{160}).
 
 \begin{pro}\label{ppchy} 
 Let $C$ be a $C^*$-algebra with LP.
 For any f.d. subspace $E\subset C$ there is $t^E$ in the unit ball
 of $\C \otimes_{\max} E$ (as defined below) such that
 for any other $t$ in the same unit ball there is a unital $*$-homomorphism
 $\pi_t: \C \to \C$ such that $[\pi_t\otimes Id_E] (t^E)=t$.\\
 \end{pro}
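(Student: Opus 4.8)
The plan is to reduce the statement to the existence of a single \emph{universal tuple} in $\C$ and then to realise the whole unit ball as the orbit of that tuple under substitutions of the free generators. Fix a basis $e_1,\dots,e_d$ of $E$ and write every element of the unit ball of $\C\otimes_{\max}E$ as $t=\sum_{k=1}^d b_k\otimes e_k$ with $b_k\in\C$. Since $\pi\otimes Id_E$ acts coordinatewise, the requirement $[\pi_t\otimes Id_E](t^E)=t$ is exactly $\pi_t(a_k)=b_k$ for $1\le k\le d$, where $t^E=\sum_k a_k\otimes e_k$. Here I would invoke the defining universal property of $\C=C^*(\F_\infty)$: a unital $*$-homomorphism $\pi\colon\C\to\C$ is nothing but an assignment $U_j\mapsto W_j$ of the free unitary generators to \emph{arbitrary} unitaries $W_j\in\C$, and $\pi\otimes Id_E$ is automatically a contraction on the max tensor product. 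Thus the goal becomes: exhibit one tuple $(a_1,\dots,a_d)$ with $\|\sum_k a_k\otimes e_k\|_{\C\otimes_{\max}E}\le 1$ whose image under \emph{every} unitary substitution sweeps out the entire unit ball coordinatewise.

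To produce such a $t^E$ I would first arrange that its orbit is dense and only afterwards upgrade to the full closed ball. Because $\C$ is separable and $E$ is finite dimensional, the unit ball $\bb B$ of $\C\otimes_{\max}E$ is norm separable; fix a dense sequence $(t^{(n)})$, $t^{(n)}=\sum_k b^{(n)}_k\otimes e_k$. Using the free-product decomposition $\F_\infty\cong\F_\infty*\F_\infty*\cdots$, equivalently $\C\cong \C*\C*\cdots$ in the universal free product, I would reserve a disjoint block of generators for each index $n$ and assemble the coordinates $a_k$ from these blocks so that a substitution supported on the $n$-th block alone returns $t^{(n)}$, the other blocks being sent to elements that are absorbed. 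The genuinely delicate point of this assembly is keeping $\|t^E\|_{\C\otimes_{\max}E}\le 1$ while still recovering each $t^{(n)}$ \emph{on the norm, not merely up to a scaling factor}: a naive convex combination of the blocks forces a loss of norm that prevents reaching the boundary of $\bb B$. Resolving this requires exploiting the freeness of the blocks (so that cross terms do not inflate the max norm) rather than a convex weighting, and this is where the structure of $\C$, as opposed to an arbitrary separable $C^*$-algebra, is essential.

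The crux, and the step where the hypothesis that $C$ has the LP is indispensable, is passing from a dense orbit to the exact closed unit ball. Given $t$ in the closed ball with orbit points $[\pi_k\otimes Id_E](t^E)\to t$, the substitutions $\pi_k$ are tuples of unitaries that need not converge inside $\C$; their cluster point lives a priori only in an ultrapower $\C_{\cl U}$ or a bidual, yielding a unital $*$-homomorphism whose image of $t^E$ equals $t$ but whose target is too large. The role of the LP of $C$, which controls the norm of $\C\otimes_{\max}E$ precisely because $E\subset C$ and liftings along the quotient $\cl L(C)=\ell_\infty(C)/c_0(C)$ are compatible with $(-)\otimes_{\max}E$, is to correct this cluster-point map into an honest unital $*$-homomorphism $\pi_t\colon\C\to\C$ with $[\pi_t\otimes Id_E](t^E)=t$ exactly. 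I expect this exact-lifting step to be the main obstacle: one must guarantee that the corrected map is \emph{multiplicative} and unital, not merely completely positive, and the way through is to use that $\C$ is generated by unitaries together with the self-adjoint completely contractive liftings that the LP supplies, lifting each approximating unitary along the relevant quotient and summing a rapidly convergent series of corrections so that the limit remains a $*$-homomorphism valued in $\C$ itself.
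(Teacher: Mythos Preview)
The paper does not prove this proposition; it is imported from \cite{157} (Th.~10.5) and \cite{160}, where the conclusion is phrased as $E$ being ``$\max$-controllable''. So there is no in-paper argument to compare against, but your outline has a genuine gap.

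The problem is the role you assign to the LP in the closure step. The LP of $C$ furnishes c.p.c.c.\ liftings of maps \emph{from $C$} through quotients; it says nothing about correcting a $*$-homomorphism $\C\to\C_{\cl U}$ back to one $\C\to\C$. The map you want to correct has domain $\C$, not $C$, and you need a multiplicative lift, not a completely positive one; there is no device that upgrades the s.a.\ c.c.\ liftings the LP supplies into a $*$-homomorphism by ``summing a rapidly convergent series of corrections''. In \cite{157} the LP of $C$ enters instead at the assembly stage --- exactly the step you flag as delicate but leave unresolved. The relevant form (the criterion invoked in the proof of Proposition~\ref{pch2'} here) is that for every family $(D_i)$ the canonical map $C\otimes_{\max}\ell_\infty(\{D_i\})\to\ell_\infty(\{C\otimes_{\max}D_i\})$ is isometric; with $D_i=\C$ this is what lets one place a dense family $(t^{(n)})$ from the unit ball of $\C\otimes_{\max}E$ side by side with total max-norm still $\le1$, and then pull the resulting element back to $\C\otimes E$ via a surjection from $\C$ and projectivity of the max tensor product. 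In short, the LP governs the norm of the assembled $t^E$; it is not a tool for repairing non-convergent sequences of $*$-homomorphisms.
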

  
  Let  $C,B$ be $C^*$-algebras and let $E\subset C$ and $F\subset B$ be   operator subspaces.
  We denote by $E\otimes F$   the algebraic tensor product.
  We denote (somewhat abusively) by $E\otimes_{\max} F$
  the closure of $E\otimes  F$ in $C\otimes_{\max} B$.
  We should emphasize that $E\otimes_{\max} F$ depends strongly
  on the ambient $C^*$-algebras  $C,B$.
  We 
  denote by $MB(E, F)$ the set of  (linear) maps $u: E \to F$ 
  such that $Id_\C \otimes u$ is bounded from  $\C\otimes E$ equipped
  with the norm induced by $\C\otimes_{\max} C$ to $\C\otimes F$
  equipped
  with the norm induced by $\C\otimes_{\max} B$. 
   When this holds,  we 
  define
   \begin{equation}\label{jr1} \|u\|_{mb}=\|Id_\C \otimes u: \C\otimes_{\max} E \to \C\otimes_{\max} F\|, \end{equation} 
  and we equip the space $MB(E, F)$ with this norm.\\
   If $C$ has the LP, it is easy to check with Proposition \ref{ppchy} that   
    \begin{equation}\label{tE}
    \|u\|_{mb} = \|[Id_\C \otimes u](t^E) \|_{\max}.
   \end{equation}
    See Lemma \ref{ch1} for full details.\\
   Since any   $C^*$-algebra $D$ is a quotient of the $C^*$-algebra of some free group,
   for any such $D$ we have (see \cite[Prop. 1.1]{157} for more details)
  \begin{equation}\label{tE'}
  \|Id_D \otimes u:D\otimes_{\max} E \to D\otimes_{\max} F\| \le \|u\|_{mb}.\end{equation}

  {\bf Some abbreviations:} We use c.b. (resp. c.c.) for completely bounded
  (resp. completely contractive), c.p. (resp. u.c.p.) for completely positive (resp. unital completely positive), and c.i. for
  completely isometric. Note that a ``contractive" operator or a ``contraction"
  is an operator of norm less than or equal to one. Similarly ``completely contractive" or ``complete contraction"   means the
  cb-norm is less than or equal to one. We also abbreviate self-adjoint by s.a. 
  
   {\bf Some background:} 
   A $C^*$-algebra $C$ has the lifting property (LP) if any c.p.c.c. map into a quotient $C^*$-algebra admits a c.p.c.c. lifting. 	It is known that this holds for $A$ if and only if it does for the unitization of $A$.
   A unital $C^*$-algebra $C$ has the lifting property (LP) if and only if any u.c.p. map into a quotient $C^*$-algebra admits a u.c.p. lifting. 
  Nuclear $C^*$-algebras have the LP (due to Choi-Effros)  
  but also $\C=C^*(\F_\infty)$ has the LP  (due to Kirchberg).   See \cite{157} and \cite{ES}
   for more background, precise references and more recent results on the LP.	 
   
  \section{Inductive systems}\label{is}
  
  For lack of valuable examples, we cannot 
  use for our lifting  $f$ a bona fide $*$-homomorphism,
but  using instead an approximate $*$-homomorphism
(or what we call an $\vp$-morphism) in \S \ref{ca}
 we will find examples in the framework of cone algebras
 to which Theorem \ref{tchy} can be applied
    to produce
non-nuclear $C^*$-algebras with the WEP
and the  LLP. In this section we  show by rather elementary arguments how
our inductive limits of linear spaces can lead to $C^*$-algebras.

  \begin{dfn}\label{rd2} Let $C,C_1$ be $C^*$-algebras with s.a. subspaces $E\subset C$ and $E_1\subset C_1$. Let $\vp>0$
            A linear map ${\psi}:  E \to E_1$ 
            (we will restrict in (iii) to the s.a. case for simplicity)
            will be called an $\vp$-morphism
            if  \begin{itemize}
             \item[ {\it (i)}]  $\|{\psi}\|\le 1+\vp$,
             \item[{ \it (ii)}]
        for any $x,y\in  E$ with $xy\in   E$ we have
          $\|{\psi}(xy) -{\psi}(x){\psi}(y)\|\le \vp \|x\|\|y\|,$
          \item[{ \it (iii)}]
          ${\psi}$ is self-adjoint i.e. for any $x\in   E$    we have        ${\psi}(x^*) ={\psi}(x)^*$.
           \end{itemize}
           \end{dfn}
           \noindent
           Obviously the restriction to $E$ of any $*$-homomorphism from $C$ to $C_1$
           is an $\vp$-morphism for any $\vp>0$.
           
          \begin{rem} \label{rch1} Let $E_n \subset C_n$ be s.a. subspaces of $C^*$-algebras $(n\ge 0)$.
          Let ${\psi}_n:  E_{n-1} \to E_n$ be    an $\vp_n$-morphism.
         Assuming 
          $ {\psi}_1( E_0){\psi}_1(  E_0) \subset   E_1 $,
           the composition 
          ${\psi}_2{\psi}_1:  E_0 \to E_2$
          has norm $\le (1+\vp_1)(1+\vp_2) $
          and hence satisfies {\it (i)} and {\it (ii)} with $\vp=\d_2$ for
         \begin{equation}  \label{o3'}\d_2= (1+\vp_2)\vp_1 +\vp_2 (1+\vp_1)^2
         .\end{equation}          This is immediate by the triangle inequality
          since
          $$ {\psi}_2{\psi}_1(xy)-{\psi}_2{\psi}_1(x){\psi}_2{\psi}_1(y)=
         [ {\psi}_2({\psi}_1(xy)-{\psi}_1(x){\psi}_1(y)) ]+[{\psi}_2 ({\psi}_1(x){\psi}_1(y)) -{\psi}_2{\psi}_1(x){\psi}_2{\psi}_1(y)].$$
          More generally, assuming $ {\psi}_{n}( E_{n-1}){\psi}_{n}(  E_{n-1}) \subset   E_{n } $ for all $n\ge 1$      
          then ${\psi}_{n}\cdots {\psi}_2{\psi}_1:   E_0 \to E_n$ has norm 
          $\le (1+\vp_{n})\cdots(1+\vp_2)(1+\vp_1)$.
          Let  $\pi_n= (1+\vp_n)\cdots (1+\vp_1)$.
          Repeating the argument for  \eqref{o3'}  shows that  ${\psi}_n\cdots {\psi}_2{\psi}_1$
          satisfies {\it (ii)}  with $\vp=\d_n$ for
 $\d_n$ satisfying for all $n\ge 2$ :
          $$\d_n=(1+\vp_n)\d_{n-1}  + \vp_{n}  \pi_{n-1}^2.$$
          Thus the number $\d_n'= \pi_n^{-1} \d_n$
          satisfies
          $$\d'_n= \d'_{n-1}+   \vp_{n}  \pi_{n-1}^2/\pi_n\le  \d'_{n-1}+   \vp_{n}  \pi_{n-1} .$$
          Thus if we assume that   $\sum \vp_n<\infty$ so that $\sup_n \pi_n= c<\infty$ we find for all $n\ge 2$ :
          $$\d_n'\le \d_1' +c\sum\nl_2^n \vp_{k} $$
          and hence for any $n\ge 1$ (note $\d_1=\vp_1 \ge \d_1' $)
           \begin{equation}  \label{ee3}\d_n\le \pi_n (\d_1' +c\sum\nl_2^n \vp_{k} )\le c \vp_1+c^2 \sum\nl_2^n \vp_{k}.\end{equation} 
        In conclusion, we have
             \begin{equation}  \label{ee3'}
             \d_n\le  e^2 \sum\nl_1^n \vp_{k},\end{equation} 
             and hence ${\psi}_{n}\cdots {\psi}_2{\psi}_1$ satisfies {\it (ii)}   with $\vp=e^2 \sum\nl_1^n \vp_{k}$.
             In addition, we observe that if $\sum \vp_k\le 1$ so that $c\le e$, then
              $\pi_n \le \exp{\sum \vp_k} \le 1 + (e-1) \sum \vp_k $, and hence ${\psi}_{n}\cdots {\psi}_2{\psi}_1$ satisfies {\it (i)}   with $\vp=e^2 \sum\nl_1^n \vp_{k}$.
           \end{rem}
Thus we have proved the following:

\begin{lem}\label{rch1'} Let $\psi_k: E_{k-1} \to E_k$ be $\vp_k$-morphisms with $\sum \vp_k\le 1$.
The composition ${\psi}_{n}\cdots {\psi}_2{\psi}_1:   E_0 \to E_n$ is an $\vp$-morphism
with $\vp =e^2 \sum\nl_1^n \vp_{k}$.
\end{lem}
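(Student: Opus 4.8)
The plan is to verify the three defining conditions (i)--(iii) of an $\vp$-morphism for the composition $\Psi_n := \psi_n\cdots\psi_2\psi_1$ by induction on $n$, disposing quickly of self-adjointness and the norm bound and concentrating the effort on the approximate multiplicativity (ii). Condition (iii) is immediate, since a composition of self-adjoint maps is self-adjoint, and the base case $n=1$ is nothing but the hypothesis on $\psi_1$. For (i), I would first note that $\|\Psi_n\|\le\prod_{k=1}^n(1+\vp_k)=:\pi_n$, and then use $\sum\vp_k\le 1$ to estimate $\pi_n\le\exp\big(\sum_{k=1}^n\vp_k\big)\le 1+(e-1)\sum_{k=1}^n\vp_k\le 1+e^2\sum_{k=1}^n\vp_k$, which is (i) with the asserted $\vp$.

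The core is condition (ii). Writing $\Psi_n=\psi_n\Psi_{n-1}$ and putting $a:=\Psi_{n-1}(x)$, $b:=\Psi_{n-1}(y)$ for $x,y\in E_0$ with $xy\in E_0$, I would base the induction on the telescoping identity
$$\Psi_n(xy)-\Psi_n(x)\Psi_n(y)=\psi_n\big(\Psi_{n-1}(xy)-ab\big)+\big(\psi_n(ab)-\psi_n(a)\psi_n(b)\big).$$
The first term is the multiplicativity defect of $\Psi_{n-1}$ amplified by $\|\psi_n\|\le 1+\vp_n$, hence bounded by $(1+\vp_n)\d_{n-1}\|x\|\|y\|$ through the inductive hypothesis; the second is precisely the defect of $\psi_n$ at the pair $(a,b)$, bounded by $\vp_n\|a\|\|b\|\le\vp_n\pi_{n-1}^2\|x\|\|y\|$ using (i) for $\Psi_{n-1}$. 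This produces the recursion for the defect constant $\d_n$,
$$\d_n=(1+\vp_n)\d_{n-1}+\vp_n\pi_{n-1}^2,$$
exactly as in \eqref{o3'} when $n=2$.

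The one genuinely delicate point is that this telescoping is legitimate only when the intermediate products lie in the right subspaces, so that condition (ii) of $\psi_n$ may be invoked for $(a,b)$: since $a,b\in\psi_{n-1}(E_{n-2})$, this is where the standing containment $\psi_k(E_{k-1})\psi_k(E_{k-1})\subset E_k$ is essential, guaranteeing $ab\in E_{n-1}$. The remaining work is then pure bookkeeping: I would linearize the recursion by passing to $\d_n':=\pi_n^{-1}\d_n$, which satisfies $\d_n'\le\d_{n-1}'+\vp_n\pi_{n-1}$, sum this from $1$ to $n$, and invoke $\sup_n\pi_n\le e$ (again from $\sum\vp_k\le 1$) to arrive at $\d_n\le e^2\sum_{k=1}^n\vp_k$, which is (ii) with the claimed $\vp$. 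No idea beyond the triangle inequality enters; the only real obstacle is tracking the constants cleanly through the recursion and checking the containment that makes each application of (ii) valid.
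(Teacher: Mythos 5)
Your proof is correct and follows essentially the same route as the paper's (Remark \ref{rch1}): the same telescoping identity, the same recursion $\d_n=(1+\vp_n)\d_{n-1}+\vp_n\pi_{n-1}^2$, its linearization via $\d_n'=\pi_n^{-1}\d_n$ with $\sup_n\pi_n\le e$, and the same convexity bound $\pi_n\le 1+(e-1)\sum_1^n\vp_k$ for condition (i). Your explicit observation that the standing containment $\psi_k(E_{k-1})\psi_k(E_{k-1})\subset E_k$ is what makes the application of condition (ii) of $\psi_n$ to the pair $(a,b)$ legitimate is precisely the hypothesis the paper carries in Remark \ref{rch1} (and which is built into the definition of an admissible system), so the two arguments coincide.
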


  \begin{dfn} 
Let $C_n$ be $C^*$-algebras.
By an admissible inductive system of o.s.  
we mean a sequence $(E_n,T_n)_{n\ge 0}$    where, for each $n$, $E_n \subset C_n$
is a f.d.s.a. operator subspace, 
the map
$T_n : E_{n-1} \to E_{n}$ is a $\vp_n$-morphism  such that $T_n(E_{n-1})T_n(E_{n-1}) \subset E_{n }$ 
and  $(\vp_n)$ is  such that $$\sum \vp_n\le 1.$$
 \end{dfn}

 Let $L=\ell_\infty(\{C_n\}) $, $\cl I_0 =c_0(\{C_n\})$ 
            and $\L=L/ \cl I_0$.  Let $Q : L \to \L$ be the quotient map.

Recall that  for any  $x\in \cl L$
admitting $y=(y_n)\in L$ as a lift (i.e. $Q(y)=x$)
we have
\begin{equation}  \label{ech3}
\|x\|_\cl L = \limsup\|y_n\|.
\end{equation}  

\def\t{\theta}
Let $\t_n  :  E_n  \to L$ be
the  mapping  defined for $n\ge 0$ by   
\begin{equation} \label{ju12}  \forall  x\in  E_n\quad  \t_n(x)=(0,\cdots,  0,x,T_{n+1}(x),T_{n+2}T_{n+1}(x), T_{n+3}T_{n+2}T_{n+1}(x),\cdots)\end{equation}
where  $x$  stands  at  the   place of index  $n$. The summability of $(\vp_n)$ ensures that 
$\t_n(x)$ is a bounded sequence.\\
Note that
\begin{equation} \label{che0}
\forall x\in E_n \qquad \t_n(x)-\t_{n+1}   T_{n+1}(x)\in c_0(\{C_n\}),
\end{equation} and hence $Q \t_n= Q \t_{n+1}   T_{n+1}$.
Let 
$$Z_n=C_0\oplus \cdots \oplus C_{n-1} \oplus   \{ (x,T_{n+1}(x),T_{n+2}T_{n+1}(x), T_{n+3}T_{n+2}T_{n+1}(x),\cdots)\mid x\in E_n \}   \subset L$$ or equivalently

\begin{equation} \label{ju11} Z_n=[C_0\oplus \cdots \oplus C_{n-1} \oplus 0 \oplus 0\oplus \cdots  ] + \t_n(E_n) \subset L\end{equation}
and
$$A_n=Q(Z_n).$$
Note that $(Z_n)$ and $(A_n)$ are non decreasing sequences.
We   set
$$Z =\ovl{\cup Z_n}\subset L \text{   and   }A= \ovl{\cup A_n}\subset \L.$$

\begin{pro}\label{pch1} 
For any admissible inductive system as above,  $A$ and $Z$ are $C^*$-subalgebras.
\end{pro}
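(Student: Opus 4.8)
The plan is to prove that $Z$ is a $C^*$-subalgebra of $L$ and then read off the claim for $A$ by applying the quotient $*$-homomorphism $Q$. By construction $Z=\ovl{\cup_n Z_n}$ is closed, and since the $Z_n$ increase and are linear subspaces, $Z$ is a closed subspace; it is self-adjoint because each $Z_n$ is, the spaces $E_n$ and the maps $T_n$ being self-adjoint (so that $\t_n(x)^*=\t_n(x^*)$ and $\t_n(E_n)$ is self-adjoint). The whole content is therefore that $Z$ is closed under multiplication, which is where the merely approximate multiplicativity of the $T_n$ must be absorbed.

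The core estimate concerns a product $\t_n(x)\t_n(y)$ for $x,y\in E_n$. Writing $T_{[k,j]}:=T_kT_{k-1}\cdots T_j$, this sequence is $0$ below index $n$, equals $xy$ at index $n$, and equals $T_{[k,n+1]}(x)\,T_{[k,n+1]}(y)$ at each index $k>n$. I would compare it with the element $\zeta\in Z_{n+1}$ whose free coordinates $0,\dots,n$ are chosen to agree with $\t_n(x)\t_n(y)$ (in particular its $n$-th entry is the free term $xy\in C_n$) and whose entry at index $n+1$ is $w:=T_{n+1}(x)T_{n+1}(y)$, which lies in $E_{n+1}$ by admissibility. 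Then $\zeta$ agrees with $\t_n(x)\t_n(y)$ on all coordinates $\le n+1$, while at each index $k>n+1$ the difference of the two entries is exactly $S(ab)-S(a)S(b)$ with $a=T_{n+1}(x),\,b=T_{n+1}(y)\in E_{n+1}$, $ab=w\in E_{n+1}$ and $S=T_{[k,n+2]}$. By Lemma \ref{rch1'} the composition $S$ is an $\vp$-morphism with $\vp=e^2\sum_{j=n+2}^{k}\vp_j$, so property (ii) bounds this entry by $(e^2\sum_{j\ge n+2}\vp_j)\|a\|\|b\|$. Taking the supremum over $k$ and using $\|a\|\le(1+\vp_{n+1})\|x\|\le 2\|x\|$ (and likewise for $b$) gives
\begin{equation*}
\mathrm{dist}(\t_n(x)\t_n(y),Z_{n+1})\le \eta_n\|x\|\|y\|,\qquad \eta_n:=4e^2\sum_{j\ge n+2}\vp_j,
\end{equation*}
and $\sum\vp_j\le1$ forces $\eta_n\to0$.

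To pass from this one-step estimate to genuine closure I would first reduce a general product in $Z_n$ to the core case by a disjoint-support argument: writing $u=c+\t_n(x)$ and $v=d+\t_n(y)$ with $c,d$ supported on coordinates $<n$, the cross terms $c\,\t_n(y)$ and $\t_n(x)\,d$ vanish, $cd$ lies in $C_0\oplus\cdots\oplus C_{n-1}\subset Z_{n+1}$, and comparing $n$-th coordinates gives $\|x\|\le\|u\|$, $\|y\|\le\|v\|$; hence $\mathrm{dist}(uv,Z)\le\eta_n\|u\|\|v\|$. Now for arbitrary $u,v\in Z$ choose $u_m,v_m\in Z_{n_m}$ with $u_m\to u$, $v_m\to v$ and $n_m\to\infty$ (possible since the $Z_n$ increase). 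Their norms stay bounded, so $\mathrm{dist}(u_mv_m,Z)\le\eta_{n_m}\|u_m\|\|v_m\|\to0$ while $u_mv_m\to uv$; as $Z$ is closed this yields $uv\in Z$, so $Z$ is a $C^*$-subalgebra.

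Finally, $A=Q(Z)$: indeed $A=\ovl{\cup_n Q(Z_n)}=\ovl{Q(\cup_n Z_n)}$, and a $*$-homomorphism between $C^*$-algebras has closed range, so $Q(Z)$ is already closed and equals $A$; as the image of the $C^*$-algebra $Z$ under $Q$ it is a $C^*$-subalgebra of $\cl L$. The step I expect to be the main obstacle is precisely this passage at the level of $Z$: a single product $\t_n(x)\t_n(y)$ generally lies in no $Z_m$, only within the fixed distance $\eta_n$ of $Z_{n+1}$, and it is the interplay of the vanishing tails $\sum_{j\ge n}\vp_j\to0$ with the closedness of $Z$ that converts approximate multiplicativity into exact multiplicative closure.
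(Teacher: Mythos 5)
Your proposal is correct and follows essentially the same route as the paper's proof: the same key one-step estimate $d\bigl(\t_n(x)\t_n(y),Z_{n+1}\bigr)\le C\,e^2\sum_{j>n+1}\vp_j\,\|x\|\|y\|$ obtained from Lemma \ref{rch1'} applied to the tail compositions, followed by the observation that $Q(Z)$ is automatically closed once $Z$ is a $C^*$-algebra. The only (cosmetic) difference is bookkeeping: you pass to the limit via approximating sequences $u_m,v_m\in Z_{n_m}$ with $n_m\to\infty$, whereas the paper re-views a fixed pair $a,b\in Z_n$ as elements of $Z_{n+m}$ and lets $m\to\infty$.
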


\begin{rem}\label{roct} In the unital case, if the spaces $E_n$ and the maps $T_n$ are all unital 
the resulting algebras $Z$ and $A$ are clearly unital.
\end{rem}
\begin{proof} 
  
      Note  that  the  infinite  product
$\prod_{j\ge  1}  (1+\vp_j)    $  converges.  
We  define  $\eta_n>0$  by  the  equality
$1+  \eta_n=  \prod\nl_{j\ge  n}  (1+\vp_j)$
so  that  $\eta_n  \to  0$.

A  priori 
$Z=\ovl{\cup  Z_n}  \subset  {  L}.$
   is  a  s.a.  subspace.  
To  check  that  $Z$  is  actually a  subalgebra  of  ${  L}$
we  will  show  that for all $n$ and all $a,b$ in $Z_n$ the product $   ab$ lies in $ Z$.

For $x\in E_n$ let
 \begin{equation}  \label{ju3} w_n(x) =(x, T_{n+1}(x), T_{n+2}T_{n+1}(x) , \cdots) \in \ell_\infty(\{C_n,C_{n+1}, \cdots \})\end{equation}  so that
 \begin{equation}  \label{ju4} \t_n(x)=(0,\cdots,0, w_n(x))\end{equation}
By Lemma \ref{rch1'}, the maps $w_n$ and $\t_n$ are $\vp$-morphisms
for $\vp=e^2 \sum_{k>n} \vp_k$.\\
Note that a typical element $a$ (resp. $b$) of $Z_n$ 
is of the form $a=(a_0,\cdots,a_{n-1}, w_n(a_n) ) $  
(resp. $b=(b_0,\cdots,b_{n-1}, w_n(b_n) ) $.
Then we also have
$a=(a_0,\cdots,a_{n-1}, a_n, w_{n+1}(T_{n+1} a_n) ) $  
(resp. $b=(b_0,\cdots,b_{n-1},b_n,  w_{n+1}(T_{n+1} b_n) ) $.  \\
Note $T_{n+1}(a_n) T_{n+1}(b_n)  \in E_{n+1}$.
Thus
$$d(ab, Z_{n+1})\le   \|  w_{n+1}(T_{n+1}(a_n)    )  w_{n+1}(T_{n+1}(b_n)    ) - w_{n+1}(T_{n+1}(a_n) T_{n+1}(b_n)   )  \|   $$
 and hence since $\|a_n\|\le  \|a\|  $ and $ \|b_n\| \le \|b\|$
$$d(ab, Z_{n+1})\le     e^2 (\sum\nl_{k>n+1} \vp_k) \|T_{n+1}(a_n) \| \|T_{n+1}(b_n) \|
\le  e^2 (\sum\nl_{k>n+1} \vp_k) (1+\vp_{n+1})^2 \|a\| \|b\| .$$But  now  since  $Z_n\subset  Z_{n+m}$ for any $m>0$, this also implies
$$\forall  a,b\in  Z_n\quad  d(ab,  Z_{n+m+1})\le   e^2 (\sum\nl_{k>n+m+1} \vp_k) (1+\vp_{n+m+1})^2 \|a\| \|b\|   \to  0,$$
when $m\to \infty$ and  hence  $ab\in  \ovl{\cup  Z_n}  =Z$.

 Clearly  the  same  conclusion  holds  for  any  $a,b\in  {\cup  Z_n}$,  so  that  $Z$
(which,    as  we  already  noticed,  is  s.a.)      is  a  $C^*$-subalgebra
of  ${  L}$. Since $Q(Z)$ is closed, it follows
that $A=Q(Z)$ is  a $C^*$-subalgebra of $\cl L$.  
\end{proof}


\begin{dfn}\label{jr4} An o.s. $X$ locally embeds in another one $C$ if for any $\vp>0$ and any f.d. subspace $E\subset X$
there is a subspace $F\subset C$ and an isomorphism 
$w: E \to F$ such that $\|w\|_{cb}\|w^{-1}\|_{cb} \le 1+\vp$. Moreover
we set
$$d_{cb}(E,F)=\inf \{\|w\|_{cb}\|w^{-1}\|_{cb} \}$$
where the infimum runs over all possible such $w$.\\
More generally, we say that $X$ locally embeds in a sequence $\{C_n\}$ of $C^*$-algebras
if for any $\vp>0$ and for any $E$  there is for  some $n$  a subspace $F\subset C_n$ 
such that $d_{cb}(E,F)\le 1+\vp$.
\end{dfn}

We will   need to discuss some additional properties:
\begin{equation}\label{ech50}   \| {T_n  
}     \|_{cb} \le  1+\vp_n ,
\end{equation}
and hence
\begin{equation}\label{ech50'}   \| {T_{[k,n]} } 
     \|_{cb} \le  \prod\nl_{n\le j\le k} (1+\vp_j) \le 1+\eta_n.
\end{equation}
We will also need 
\begin{equation}\label{ech5}   \| {T_n^{-1}}_{| {T_n(E_{n-1} )}
}     \|_{cb} \le  1+\vp_n.
\end{equation}
and hence
\begin{equation}\label{ech5'}   \| {T_{[k,n]}^{-1}}_{| { T_{[k,n]}(E_{n-1} ) }
}     \|_{cb} \le  \prod\nl_{n\le j\le k} (1+\vp_j) \le 1+\eta_n.
\end{equation}
 
   \begin{pro}\label{pch10}
  Assume  
that the   inductive system satisfies \eqref{ech50} and \eqref{ech5},
 then $A$ locally embeds in $\{C_n\}$. In fact for all $n\ge 0$
 \begin{equation}\label{echy5}
 d_{cb} (E_n,A_n)\le (1+\eta_{n+1})^2.
 \end{equation}
\end{pro}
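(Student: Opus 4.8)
The plan is to identify $A_n$ with the range of a single explicit map and then read off the two-sided cb-estimate \eqref{echy5} directly from \eqref{ech50'} and \eqref{ech5'}. First I would note that in the description \eqref{ju11} the summand $C_0\oplus\cdots\oplus C_{n-1}\oplus 0\oplus\cdots$ consists of finitely supported sequences, hence lies in $c_0(\{C_n\})=\cl I_0=\ker Q$; consequently $A_n=Q(Z_n)=Q\t_n(E_n)$. Writing $\phi_n=Q\t_n : E_n\to A_n$, the goal becomes to show that $\phi_n$ is a linear isomorphism onto $A_n$ with $\|\phi_n\|_{cb}\,\|\phi_n^{-1}\|_{cb}\le(1+\eta_{n+1})^2$, since by definition $d_{cb}(E_n,A_n)\le\|\phi_n\|_{cb}\,\|\phi_n^{-1}\|_{cb}$.

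The key step is the norm formula for $\phi_n$. Since $\t_n(x)$ is a lift of $\phi_n(x)$ and, by \eqref{ju12}, its coordinate of index $k$ equals $T_{[k,n+1]}(x)$ for $k>n$, formula \eqref{ech3}, applied at the matrix level after identifying $M_p(\cl L)$ with $\ell_\infty(\{M_p(C_k)\})/c_0(\{M_p(C_k)\})$, gives for every $p$ and every $x\in M_p(E_n)$
\[
\|(Id_{M_p}\otimes\phi_n)(x)\|=\limsup_{k\to\infty}\|(Id_{M_p}\otimes T_{[k,n+1]})(x)\|.
\]
The upper bound then follows at once from \eqref{ech50'}: since $\|T_{[k,n+1]}\|_{cb}\le 1+\eta_{n+1}$ for every $k$, taking the limsup yields $\|\phi_n\|_{cb}\le 1+\eta_{n+1}$.

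For the lower bound I would invoke \eqref{ech5'}: the restriction of $T_{[k,n+1]}^{-1}$ to $T_{[k,n+1]}(E_n)$ has cb-norm $\le 1+\eta_{n+1}$, so that $\|(Id_{M_p}\otimes T_{[k,n+1]})(x)\|\ge (1+\eta_{n+1})^{-1}\|x\|$ holds for each individual $k>n$. Passing to the limsup preserves this inequality, whence $\|(Id_{M_p}\otimes\phi_n)(x)\|\ge (1+\eta_{n+1})^{-1}\|x\|$. Taking $p=1$ shows in particular that $\phi_n$ is injective, hence a linear isomorphism onto $A_n$, and the matrix inequality reads $\|\phi_n^{-1}\|_{cb}\le 1+\eta_{n+1}$. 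Multiplying the two estimates produces \eqref{echy5}.

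Finally, to deduce the local embedding of $A=\ovl{\cup A_n}$, I would use the standard perturbation principle: given $\vp>0$ and a f.d. subspace $E\subset A$, a basis of $E$ can be approximated arbitrarily well by elements of $\cup_m A_m$, all lying in a single $A_n$ with $n$ as large as we please, and a sufficiently small perturbation of a basis changes the cb-distance by less than any prescribed amount. Since $\eta_{n+1}\to 0$, choosing $n$ large makes both the perturbation error and $(1+\eta_{n+1})^2-1$ small, so that $d_{cb}(E,\phi_n^{-1}(\tilde E))\le 1+\vp$ for the corresponding subspace $\phi_n^{-1}(\tilde E)\subset E_n\subset C_n$. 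I expect the two cb-estimates to be routine once the norm formula is in hand; the points demanding genuine care are the matrix-level version of \eqref{ech3} and the standard—but slightly delicate—perturbation step, which is where the passage from the fixed subspaces $A_n$ to an \emph{arbitrary} finite-dimensional subspace of $A$ actually takes place.
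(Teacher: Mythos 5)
Your proposal is correct and follows essentially the same route as the paper: the map you call $\phi_n$ is exactly the paper's $\alpha_n=Q\t_n$, the two cb-estimates come from \eqref{ech50'} and \eqref{ech5'} via the limsup norm formula \eqref{ech3} applied at the matrix level, and the final density-plus-perturbation step matches the paper's conclusion. Your write-up is somewhat more explicit than the paper's (e.g.\ in noting $A_n=Q\t_n(E_n)$ and in spelling out the perturbation argument), but there is no substantive difference in method.
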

\begin{proof}
By perturbation it clearly suffices to show that
$A_n$ locally embeds in $\{C_k, k\ge 0\}$ for all $n$. Fix $n$.
Recall that $Q: L \to \cl L$ denotes the quotient map.
Consider the map $\t_n : E_n \to L$
and let $\alpha_n = Q\t_n : E_n \to A_n$.
We have
$\|  \alpha_n\|_{cb} \le \| \t_n\|_{cb} \le \max\{ 1,  \sup\nl_{k> n}\|T_{[k,n+1]}\|_{cb}\}
$ and hence by  \eqref{ech50}
$\|  \alpha_n\|_{cb}  \le 1+\eta_{n+1}$.
 To estimate $\|  \alpha_n^{-1}\|_{cb} $ we will use \eqref{ech5}.
Let $x\in E_n$. We have $\|T_{[k,n+1]} (x)\| \ge (1+\eta_{n+1} )^{-1} \|x\|$ by
\eqref{ech5'} and hence
$$\|x\|=\limsup \|T_{[k,n+1]} (x)\| \ge (1+\eta_{n+1} )^{-1} \|x\| .$$
The latter remains true for any $x\in M_N(E_n)\simeq M_N \otimes E_n$ and any fixed $N$
when $T_{[k,n+1]} $ is replaced by $Id_{M_N} \otimes T_{[k,n+1]} $.
Thus we obtain
 \begin{equation}\label{cho4}
 \|  \alpha_n^{-1}\|_{cb} \le 1+\eta_{n+1} .
 \end{equation}
This implies $d_{cb}(A_n, E_n)\to 1$ when $n \to \infty$, and since
$\cup A_n$ is dense in $A$, this is enough to show that
$A$ locally embeds in $\{C_n\}$.
\end{proof}
\begin{rem}\label{ju17}
 By definition of the LLP
it is easy to see that any $C$ with LLP locally embeds in $\C$.
In particular, for any nuclear $C^*$-algebra $D$, $D \otimes_{\min}\C$ 
has LLP and hence locally embeds in $\C$.
\end{rem}
Let $p_{[n,\infty]} $ denote 
the  $*$-homomorphism taking $x\in L$ to $(x_n, x_{n+1}, \cdots) \in \ell_\infty(\{ C_k, k\in [n,\infty] \} )$. 
Note that $p_{[n,\infty]} (Z)$ is a $C^*$-subalgebra of $\ell_\infty(\{ C_k, k\in [n,\infty] \} )$. \\
It is convenient to record here the following elementary fact.
\begin{lem}\label{deco}
Let $n\ge 1$. The map
$\Phi_n: z\mapsto (z_0,\cdots,z_{n-1} ) \oplus p_{[n,\infty]} (z) $ is an isomorphism
from $Z$ to $C_0\oplus\cdots\oplus C_{n-1} \oplus p_{[n,\infty]} (Z) \subset C_0\oplus\cdots\oplus C_{n-1} \oplus \ell_\infty(\{ C_k, k\in [n,\infty] \} )$.
\end{lem}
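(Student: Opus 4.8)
The plan is to identify $\Phi_n$ as an injective $*$-homomorphism and then to pin down its range; only the range identification carries any content. First I would note that each coordinate evaluation $z\mapsto z_j$ (for $0\le j\le n-1$) and the tail projection $p_{[n,\infty]}$ is the restriction to $Z$ of a $*$-homomorphism defined on all of $L$, so $\Phi_n$, being their direct sum, is itself a $*$-homomorphism on $Z$.

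Injectivity is then immediate: if $\Phi_n(z)=0$ then $z_j=0$ for $j<n$, while $p_{[n,\infty]}(z)=0$ forces $z_k=0$ for every $k\ge n$, so $z=0$. Since an injective $*$-homomorphism between $C^*$-algebras is automatically isometric, $\Phi_n$ is a $*$-isomorphism from $Z$ onto its range, and it remains only to compute that range.

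The inclusion of the range in $C_0\oplus\cdots\oplus C_{n-1}\oplus p_{[n,\infty]}(Z)$ is clear from the definition of $\Phi_n$. For the reverse inclusion, fix $(c_0,\dots,c_{n-1},w)$ with $c_j\in C_j$ and $w\in p_{[n,\infty]}(Z)$, and choose $z'\in Z$ with $p_{[n,\infty]}(z')=w$. From \eqref{ju11}, taking $x=0$ in $\theta_n(E_n)$, the block $C_0\oplus\cdots\oplus C_{n-1}\oplus 0\oplus 0\oplus\cdots$ already lies in $Z_n\subset Z$; in particular the finitely supported sequence $d$ with $d_j=c_j-z'_j$ for $j<n$ and $d_k=0$ for $k\ge n$ belongs to $Z$. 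Then $z:=z'+d\in Z$ satisfies $z_j=c_j$ for $j<n$ and $p_{[n,\infty]}(z)=p_{[n,\infty]}(z')=w$, so $\Phi_n(z)=(c_0,\dots,c_{n-1},w)$, giving the desired surjectivity.

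The one substantive step — and the only place where anything beyond bookkeeping enters — is this last decoupling: because $Z$ contains every sequence supported on the first $n$ coordinates, the values of those coordinates may be prescribed independently of the tail $p_{[n,\infty]}(z)$. This is precisely what makes the range split as the full direct sum $C_0\oplus\cdots\oplus C_{n-1}\oplus p_{[n,\infty]}(Z)$ rather than some proper $C^*$-subalgebra of it; everything else is formal.
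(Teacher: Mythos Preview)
Your proof is correct and follows essentially the same approach as the paper: both identify $\Phi_n$ as an injective $*$-homomorphism and then establish surjectivity via the key observation that $C_0\oplus\cdots\oplus C_{n-1}\oplus 0\oplus\cdots\subset Z_n\subset Z$, so the first $n$ coordinates decouple from the tail. The only minor difference is that the paper argues surjectivity by hitting a dense subset (elements with tail in $p_{[n,\infty]}(\cup_m Z_m)$) and then invoking that the range of a $*$-homomorphism is closed, whereas you lift an arbitrary element of $p_{[n,\infty]}(Z)$ directly; your version is slightly cleaner but the idea is the same.
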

\begin{proof} It is clear that $\Phi_n$ is an injective $*$-homomorphism
with range included in $C_0\oplus\cdots\oplus C_{n-1} \oplus p_{[n,\infty]} (Z)$.
Let  $y^0\in C_0\oplus\cdots\oplus C_{n-1} $ and let $y\in p_{[n,\infty]}
(\cup Z_m)$.  We may assume $y\in p_{[n,\infty]}
(Z_m)$ for some $m >n$. Thus $y=(y_{n}, \cdots, y_m, T_{m+1}y_m,T_{m+2}T_{m+1}y_m, \cdots) $ with $y_m\in E_m$. Let
$$z=(y^0_0,\cdots,y^0_{n-1}, y_{n}, \cdots, y_m, T_{m+1}y_m,T_{m+2}T_{m+1}y_m, \cdots). $$
Then $z\in Z_m $ and $\Phi_n(z) =(y^0 , y)\in  C_0\oplus\cdots\oplus C_{n-1} \oplus p_{[n,\infty]} (Z)$.
This shows that the range of $\Phi_n$ is dense in $C_0\oplus\cdots\oplus C_{n-1} \oplus p_{[n,\infty]} (Z)$ and hence that $\Phi_n$ is surjective.
   \end{proof}
   
\section{Approximately multiplicative self-adjoint liftings of $F$}\label{am}
                
                Consider a separable $C^*$-algebra $C$ with LP and
                a $*$-homomorphism $$F : C \to \cl L(C)=
                \ell_\infty(C)/c_0(C).$$
                We know that $F$ admits
                a   c.c. lifting $(f_n): C \to \ell_\infty(C)$
                which is (automatically) asymptotically
               multiplicative and self-adjoint, meaning that
                          \begin{equation}\label{ju2}
                \forall x,y\in C\qquad \limsup\|f_n(xy)-f_n(x)f_n(y)\|=0  \text{   and    } f_n(x^*)=f_n(x)^*. \end{equation}
                Let $(\d_n)$ be a given positive sequence.
                We will construct a system
                of f.d. subspaces $E_n \subset  C$  ($n\ge 0$),  maps
                $$T_{n } : E_{n-1} \to E_{n},$$ a  sequence of integers $(m(n))$     and   a sequence $\vp_n>0$,  
                such that $\sum \vp_n\le 1$, satisfying the following properties:

                (i) Each $T_n$ is the restriction of   $f_{m(n)}$ to $E_{n-1}$ and $T_n$ is an $\vp_n$-morphism.

                (ii) $T_{n }(E_{n-1}) + T_{n }(E_{n-1})T_{n }(E_{n-1}) \subset E_{n } $.
                
                (iii)    For any $n> k$ we have $e^2 \sum_{k+1}^n  \vp_j< \vp_k.$

                (iv) For any $C^*$-algebra $D$ and any $\vp_n$ -morphism $\t : E_{n} \to D$
                we have $$\| [ Id_\C \otimes \t T_n] (t^{E_{n-1}} )  \|_{\max} \le 1+ \d_n .$$ 
                See Proposition \ref{ppchy} for the definition of $t^{E_{n-1}}$.
                By \eqref{tE},  (iv)  is equivalent to:
                
                (iv)' For any $C^*$-algebra $D$ and any $ \vp_n$-morphism $\t : E_{n} \to D$
                we have $$\|   \t T_n  \|_{MB(E_{n-1},D)} \le 1+ \d_n. $$

More explicitly by \eqref{tE'} (iv)' is equivalent to :
 
 (iv)'' For any $C^*$-algebra $D$ and any $  \vp_n$-morphism $\t : E_{n } \to D$
we have for any $C^*$-algebra $B$
         $$\| Id\otimes   \t T_n : B\otimes_{\max}  E_{n-1}  \to B\otimes_{\max} D  \|  \le 1+ \d_n. $$ 

Since $(f_n)$ is c.c. we have 
\begin{equation}\label{ech4} \| T_n\|_{cb} \le 1.  \end{equation}

\begin{rem}\label{r1} Note that (iv) applied to the identity map from $E_n$ to $C$
implies $\| [ Id \otimes  T_n] (t^{E_{n-1}} )  \|_{\max} \le 1+ \d_n .$
\end{rem}

The construction of our system is by induction: once we have $E_n,T_n, \vp_n$ we must produce $E_{n+1}, \vp_{n+1}$,  
and $T_{n+1} : E_n \to E_{n+1}$. Compared with \cite{155} the novelty is the use of  conditions (iii) and
(iv).

\begin{lem} \label{ch1} Let $C$ be a  $C^*$-algebra  with LP. Let $E\subset C$ be a f.d. subspace.\\
Then for any
           linear map $\psi:   E \to D$ ($D$ any other $C^*$-algebra) we have
           $$ \|\psi  \|_{mb} \le     \|( Id_\C \otimes \psi)( t^E)   \|_{\C\otimes_{\max} D} .$$
           In other words if we set $\hat \psi=Id_\C \otimes \psi : \C\otimes_{\max} E \to \C\otimes_{\max} D$
           we have $\|\hat\psi\| =\|\hat\psi( t^E)   \|$.
\end{lem}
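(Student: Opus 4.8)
The plan is to prove the two inequalities separately, noting that only the lower bound on $\|\hat\psi\|$ requires real work. The upper bound $\|\hat\psi(t^E)\|\le\|\hat\psi\|$ is immediate from Proposition \ref{ppchy}, since $t^E$ lies in the unit ball of $\C\otimes_{\max}E$ and hence $\|\hat\psi(t^E)\|\le\|\hat\psi\|\,\|t^E\|\le\|\hat\psi\|$. Everything therefore reduces to showing $\|\hat\psi\|\le\|\hat\psi(t^E)\|$, i.e.\ that the value of $\hat\psi$ at the single universal element $t^E$ already controls its whole norm.

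To this end I would take an arbitrary $t$ in the unit ball of $\C\otimes_{\max}E$ and invoke the universality of $t^E$ from Proposition \ref{ppchy}: there is a unital $*$-homomorphism $\pi_t:\C\to\C$ with $(\pi_t\otimes Id_E)(t^E)=t$. The decisive observation is the intertwining identity
$$(Id_\C\otimes\psi)\circ(\pi_t\otimes Id_E)=(\pi_t\otimes Id_D)\circ(Id_\C\otimes\psi),$$
which one checks directly on elementary tensors via $c\otimes e\mapsto \pi_t(c)\otimes\psi(e)$ (this uses only the linearity of $\psi$, not any multiplicativity) and then extends by continuity to the closure $\C\otimes_{\max}E$. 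Applied at $t^E$ this yields
$$\hat\psi(t)=(Id_\C\otimes\psi)(\pi_t\otimes Id_E)(t^E)=(\pi_t\otimes Id_D)\,\hat\psi(t^E).$$

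The key step, and the one I expect to be the main (if standard) point to get right, is that $\pi_t\otimes Id_D$ is contractive on $\C\otimes_{\max}D$. This is exactly the functoriality of the maximal tensor product: since $\pi_t:\C\to\C$ is a $*$-homomorphism, $\pi_t\otimes Id_D$ extends from the algebraic tensor product to a $*$-homomorphism of $\C\otimes_{\max}D$ into itself, and every $*$-homomorphism between $C^*$-algebras is contractive. One must also take care that $\pi_t\otimes Id_E$ preserves the closed subspace $\C\otimes_{\max}E$ (it maps $\C\otimes E$ into itself, hence the closure into itself) and that all maps in sight are continuous so that the intertwining identity survives passage to the closure. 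Granting this, $\|\hat\psi(t)\|=\|(\pi_t\otimes Id_D)\hat\psi(t^E)\|\le\|\hat\psi(t^E)\|$, and taking the supremum over all $t$ in the unit ball gives $\|\hat\psi\|\le\|\hat\psi(t^E)\|$. Combined with the trivial bound this proves the asserted equality, and simultaneously establishes the identity \eqref{tE} for which this lemma supplies the full details.
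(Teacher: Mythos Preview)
Your proposal is correct and follows essentially the same route as the paper: invoke Proposition~\ref{ppchy} to write any $t$ in the unit ball as $(\pi_t\otimes Id_E)(t^E)$, commute $\pi_t$ past $\psi$ via the intertwining identity, and use that $\pi_t\otimes Id_D$ is a (hence contractive) $*$-homomorphism on $\C\otimes_{\max}D$. The paper's proof is simply a more compressed version of yours, omitting the explicit mention of the trivial inequality and the continuity checks.
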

 \begin{proof}  
            Since $C$ has the LP   (by Th. 10.5 in \cite{157}) $E$ is ``$\max$-controlable" in the sense of \cite{160}, i.e. there is
            $t^E$ in the unit ball of $\C\otimes_{\max} E$ such that  
            for any   $t \in B_{\C\otimes_{\max} E}$  there is a unital $*$-homomorphism
            $\pi: \C \to \C$ such that $t=(\pi \otimes Id_E) (t^E)$. Then $\|( Id_\C \otimes \psi)( t)\|_{\max}=  \|( \pi \otimes Id) (Id \otimes  \psi)( t^E) \|_{\max} \le  \|\hat\psi( t^E)   \|_{\max}$ whence the announced inequality.
             \end{proof}

           \begin{lem}\label{ch2} Let $C,{C_1}$ be $C^*$-algebras. 
           Let $E\subset C$, $F \subset {{C_1}}$ be  f.d.  subspaces. 
           For any $\d>0$ there is $\vp>0$ and 
             a f.d.s.a. superspace  $\cl E$ with  $E\subset \cl E\subset C$ 
           such that for any 
           $\vp$-morphism $\psi: \cl E \to {D}$ (${D}$ any other $C^*$-algebra) we have
           $$\forall x\in F \otimes E\quad \|(Id_{C_1} \otimes \psi)(x)   \|_{ {C_1}\otimes_{\max} {D} }\le (1+\d) \|x\|_{{C_1} \otimes_{\max} C}.$$
           \end{lem}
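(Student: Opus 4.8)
The plan is to argue by contradiction, converting a family of $\vp$-morphisms with ever-improving constants into a genuine $*$-homomorphism, and then exploiting the fact that a $*$-homomorphism never increases the maximal tensor norm. Suppose the assertion fails for some $\d>0$. Then for every finite-dimensional self-adjoint $\cl E$ with $E\subset\cl E\subset C$ and every integer $n$ there is a $(1/n)$-morphism $\psi_{\cl E,n}:\cl E\to D_{\cl E,n}$ and an element $x_{\cl E,n}\in F\otimes E$ with $\|x_{\cl E,n}\|_{C_1\otimes_{\max}C}\le 1$ but $\|(Id_{C_1}\otimes\psi_{\cl E,n})(x_{\cl E,n})\|_{C_1\otimes_{\max}D_{\cl E,n}}>1+\d$. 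I would index these data by the directed set of pairs $(\cl E,n)$, ordered by inclusion in the first coordinate and size in the second, and fix an ultrafilter $\cl U$ refining the order filter, so that along $\cl U$ the space $\cl E$ eventually contains any prescribed finite set while $n\to\infty$.

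For each pair, realize the maximal norm by commuting representations: choose $\rho_1^{\cl E,n}:C_1\to B(K_{\cl E,n})$ and $\rho_2^{\cl E,n}:D_{\cl E,n}\to B(K_{\cl E,n})$ with commuting ranges for which $\|(\rho_1^{\cl E,n}\cdot(\rho_2^{\cl E,n}\psi_{\cl E,n}))(x_{\cl E,n})\|>1+\d$, and set $u_{\cl E,n}=\rho_2^{\cl E,n}\psi_{\cl E,n}:\cl E\to B(K_{\cl E,n})$, whose range lies in the commutant of $\rho_1^{\cl E,n}(C_1)$. Passing to the C*-ultraproduct $\cl B=\prod_{\cl U}B(K_{\cl E,n})$, the $\rho_1^{\cl E,n}$ assemble into a $*$-homomorphism $\rho_1:C_1\to\cl B$, and the $u_{\cl E,n}$ assemble into a self-adjoint contraction $u:C\to\cl B$; this is well defined since any fixed $a\in C$ lies in $\cl E$ along a tail, and contractive since $\|u_{\cl E,n}\|\le 1+1/n\to 1$ along $\cl U$.

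The crucial point, and the step I expect to be the main obstacle, is that $u$ is multiplicative, hence a representation of all of $C$. For fixed $a,b\in C$, along the tail of pairs with $a,b,ab\in\cl E$ the $(1/n)$-morphism property of $\psi_{\cl E,n}$ (Definition \ref{rd2}) gives $\|u_{\cl E,n}(ab)-u_{\cl E,n}(a)u_{\cl E,n}(b)\|\le (1/n)\|a\|\|b\|\to 0$ along $\cl U$. This is exactly where letting $\cl E$ range over \emph{all} finite-dimensional self-adjoint subspaces, rather than a fixed increasing sequence, pays off: every product $ab$ is eventually captured, so $u$ is a genuine $*$-homomorphism on the whole of $C$, with no separability hypothesis and no passage to a subalgebra. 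Moreover $u$ commutes with $\rho_1$, since each $u_{\cl E,n}$ takes values in $\rho_1^{\cl E,n}(C_1)'$.

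Consequently $\rho_1$ and $u$ are commuting representations, so $\rho_1\cdot u$ extends to a $*$-homomorphism on $C_1\otimes_{\max}C$ and does not increase the maximal norm. Writing $x_\infty=\lim_{\cl U}x_{\cl E,n}$, an ultralimit in the finite-dimensional space $F\otimes E$ with $\|x_\infty\|_{C_1\otimes_{\max}C}\le 1$, I get $\|(\rho_1\cdot u)(x_\infty)\|\le\|x_\infty\|_{C_1\otimes_{\max}C}\le 1$. On the other hand $(\rho_1\cdot u)(x_\infty)$ is the image in $\cl B$ of the bounded family $((\rho_1^{\cl E,n}\cdot u_{\cl E,n})(x_{\cl E,n}))$, because $\|x_{\cl E,n}-x_\infty\|\to 0$ along $\cl U$ and the relevant bilinear maps are uniformly bounded; hence its norm equals $\lim_{\cl U}\|(\rho_1^{\cl E,n}\cdot u_{\cl E,n})(x_{\cl E,n})\|\ge 1+\d$, a contradiction. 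Everything besides the multiplicativity of the limit is the routine verification that $*$-homomorphisms are maximal-tensor contractions, and that ultraproducts commute with the operations involved.
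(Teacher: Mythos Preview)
Your argument is correct and complete. The key idea---assembling $\vp$-morphisms with $\vp\to 0$ into a genuine $*$-homomorphism in a limit object, then invoking that $*$-homomorphisms contract the maximal tensor norm---is the same as the paper's, but the packaging differs. The paper works with the quotient $\ell_\infty(I;\{D_\alpha\})/c_0(I;\{D_\alpha\})$ over the directed set $I=\{(\cl E,\vp)\}$, builds the limiting $*$-homomorphism $\pi:C\to\ell_\infty/c_0$ directly from the $\psi_\alpha$'s (extended by $0$), and then appeals to the exactness of the max-tensor product to push $Id_{C_1}\otimes\pi$ through the quotient. It first proves the inequality for a single fixed $x\in F\otimes E$ and then upgrades to all $x$ via a finite $\d$-net (this two-step structure is natural because the only application, Lemma~\ref{ch3}, needs just the single tensor $t^E$). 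You instead represent each max-norm by commuting representations, pass to a $C^*$-ultraproduct of the $B(K_{\cl E,n})$'s, and let the $x_{\cl E,n}$ vary, extracting a limit $x_\infty$ in the compact ball of the finite-dimensional $F\otimes E$ along the ultrafilter. Your route avoids both the exactness-of-max-tensor lemma and the separate $\d$-net step, at the cost of the (standard) ultrafilter machinery and the small bookkeeping that the bilinear maps $\rho_1^{\cl E,n}\cdot u_{\cl E,n}$ are uniformly bounded on the fixed finite-dimensional space $F\otimes E$; the paper's route is slightly more elementary and highlights that only the fixed-$x$ case is really needed downstream.
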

           
           \begin{proof} Let $I=\{ (\cl E,\vp)\}$ be the directed set of pairs with $E\subset \cl E$, $\vp>0$. Fix $\d>0$.
           It suffices to show  that there is $(\cl E,\vp)\in I$ such that 
           for all ${D}$, all
           $\vp$-morphisms $\psi: \cl E \to {D}$   and all   $x\in F \otimes E$ we have
           $ \|(  Id_{C_1} \otimes \psi)(x)   \|_{{C_1} \otimes_{\max} {D} }\le (1+\d) \|x\|_{{C_1} \otimes_{\max} C}$.\\
           Let us first fix $x\in F\otimes E$.
           For each $\alpha\in I$ with $E\subset \cl E_\alpha$,
           there is a ${D}_\alpha$ and
           an $\vp_\alpha$-morphism $\psi_\alpha: \cl E_\alpha \to {D}_\alpha$,
           such that
           $$\|(Id_{{C_1}}  \otimes \psi_\alpha)(x)\|_{{{C_1}}  \otimes_{\max} {D}_\alpha} \ge (1+\d)^{-1} \sup \|(
           Id_{{C_1}}  \otimes \psi )(x)\|_{ {{C_1}}  \otimes_{\max} {D}}$$ 
           where the last supremum runs over all ${D}$ and all
           $\vp_\alpha$-morphism $\psi: \cl E_\alpha \to {D} $.
           Note that this last supremum is finite since the nuclear norm
           of each $\psi:\cl E_\alpha \to {D} $ is at most
           $(1+\vp_\a) \dim(\cl E_\alpha) $.
         Let  $\psi'_\alpha : C \to {D}_\alpha$ be the map that extends $\psi_\alpha$   by $0$ outside $\cl E_\alpha$ (we could use a linear map but this is not needed at this point).
         Consider $\psi'=(\psi'_\alpha): C \to  \ell_\infty (I; \{{D}_\alpha\})$ and 
         let $Q: \ell_\infty (I; \{{D}_\alpha\})\to \ell_\infty (I; \{{D}_\alpha\})/c_0(I; \{{D}_\alpha\})$ be the quotient map.
         Then $\pi=Q \psi': C \to \ell_\infty (I; \{{D}_\alpha\})/c_0(I; \{{D}_\alpha\})$ is clearly a contractive $*$-homomorphism.
         We have a contractive morphism $Id_{{C_1}}   \otimes \pi : $ 
         $$ {{C_1}} \otimes_{\max} C  \to {{C_1}} \otimes_{\max} [\ell_\infty (I; \{{D}_\alpha\})/c_0(I; \{{D}_\alpha\})]  = [  {{C_1}} \otimes_{\max} \ell_\infty (I; \{{D}_\alpha\})]/
         [   {{C_1}}\otimes_{\max} c_0(I; \{{D}_\alpha\}) ]$$
         where the last $=$ holds by the ``exactness" of the max-tensor product (see e.g. \cite[p. 285]{P4}).
         Moreover, we have clearly a contractive morphism 
         $$  [  {{C_1}} \otimes_{\max} \ell_\infty (I; \{{D}_\alpha\})]/[{{C_1}}  \otimes_{\max} c_0(I; \{{D}_\alpha\})]
         \to  [\ell_\infty (I; \{ {{C_1}}  \otimes_{\max}  {D}_\alpha\}) ]/[c_0(I; \{ {{C_1}} \otimes_{\max} {D}_\alpha\}) ]. $$
         Since   $\pi(e)= Q ( (\psi_\alpha(e))_\a )$ for all $e\in E$ and since $x\in {{C_1}}  \otimes E$, it follows that 
         $$\limsup\nl_\alpha  \|(Id_{{C_1}}   \otimes \psi_\alpha)(x)\|_{{{C_1}}  \otimes_{\max} {D}_\alpha } \le \|x\|_{{{C_1}} \otimes_{\max} C},$$
         which proves the desired result for each fixed given $x\in F\otimes E\subset  {{C_1}}  \otimes E$.
         But since $F \otimes E$ is a finite dimensional subspace of ${{{C_1}}  \otimes_{\max} C } $
         we may replace the unit ball by a finite $\d$-net in it 
         (or invoke Ascoli's theorem). We can deal with the latter case by
        enlarging $\cl E$ finitely many times. We obtain the announced result (possibly with $2\d$ instead of $\d$).\\
         A different proof can be obtained using the Blecher-Paulsen factorization, as   in \cite[Th. 26.8]{P4}.
             \end{proof}
             
 \begin{lem}\label{ch3} In the situation of the preceding Lemma \ref{ch1},   
           for any $\d>0$ there is $\vp>0$ and 
             a f.d.s.a. superspace  $\cl E$ with  $E\subset \cl E\subset C$ 
           such that for any  other $C^*$-algebra $D$ and any
           $\vp$-morphism $\psi: \cl E \to D$   we have
           $$\forall x\in \C \otimes  E \quad \|( Id_\C \otimes \psi)(x)   \|_{\C\otimes_{\max} D}\le (1+\d) \|x\|_{\C\otimes_{\max} E},$$
           or equivalently for any $D$
           $$ \|\psi_{|   E} \|_{MB(E,D)} \le 1+\d.$$
           \end{lem}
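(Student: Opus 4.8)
The plan is to use Lemma~\ref{ch1} to collapse the whole infinite-dimensional factor $\C$ onto the single universal element $t^E$, and then to approximate $t^E$ by a tensor living on a finite-dimensional part of $\C$, where Lemma~\ref{ch2} is available. Indeed, applying Lemma~\ref{ch1} to the restriction $\psi_{|E}$ gives $\|\psi_{|E}\|_{mb}=\|(Id_\C\otimes\psi)(t^E)\|_{\max}$, and by \eqref{tE} this is exactly $\|\psi_{|E}\|_{MB(E,D)}$; so it suffices to produce $\cl E$ and $\vp$ making $\|(Id_\C\otimes\psi)(t^E)\|_{\max}\le 1+\d$ for every $\vp$-morphism $\psi:\cl E\to D$.

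Before choosing $\cl E$ I would extract a crude a priori bound depending only on the fixed space $E$. Fix a basis $e_1,\dots,e_d$ of $E$ with $\|e_i\|\le 1$ and dual functionals $e_i^*$, and set $K_0=2\sum_i\|e_i^*\|$. Since a functional on an operator space is completely bounded with the same norm, the slice maps $Id_\C\otimes e_i^*$ are bounded by $\|e_i^*\|$ on $\C\otimes_{\max}E$; writing $z=\sum_i((Id_\C\otimes e_i^*)(z))\otimes e_i$ and using the triangle inequality then shows that every linear $u:E\to D$ with $\|u(e_i)\|\le 2$ satisfies $\|u\|_{mb}\le K_0$, uniformly over $D$. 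In particular, any $\vp$-morphism with $\vp\le 1$ restricts to a map on $E$ of $mb$-norm at most $K_0$, and $K_0$ does not involve $\cl E$ at all.

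Now I would fix the approximation scale and invoke Lemma~\ref{ch2}. Choose $\eta>0$ with $K_0\eta<\d/3$, and pick $x_0$ in the algebraic tensor product $\C\otimes E$ with $\|t^E-x_0\|_{\max}<\eta$; then $x_0\in G\otimes E$ for some finite-dimensional $G\subset\C$. Apply Lemma~\ref{ch2} with $C_1=\C$, $F=G$ and accuracy $\d'<\d/3$: this yields a superspace $E\subset\cl E\subset C$ and a number $\vp>0$, which I shrink so that $\vp\le 1$, such that every $\vp$-morphism $\psi:\cl E\to D$ obeys $\|(Id_\C\otimes\psi)(x_0)\|_{\max}\le(1+\d')\|x_0\|_{\max}$. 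Splitting $(Id_\C\otimes\psi)(t^E)=(Id_\C\otimes\psi)(x_0)+(Id_\C\otimes\psi)(t^E-x_0)$, the first term is at most $(1+\d')(1+\eta)$, while the second lies in the image of $\C\otimes E$ and is therefore bounded by $\|\psi_{|E}\|_{mb}\,\|t^E-x_0\|_{\max}\le K_0\eta$. For $\d',\eta$ small the total is at most $1+\d$, and the reduction of the first paragraph finishes the proof.

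The step I expect to be delicate is exactly the control of the error $\|(Id_\C\otimes\psi)(t^E-x_0)\|_{\max}$: it must be dominated before any bound on $\psi$ over all of $\cl E$ is known, since such a bound is the very conclusion. This forces the ordering of choices --- the a priori constant $K_0$ is read off from the fixed space $E$ alone, so $\eta$ (and hence $G$) can be fixed before Lemma~\ref{ch2} produces $\cl E$ and $\vp$; reversing this order would make the argument circular.
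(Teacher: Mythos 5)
Your proof is correct and follows essentially the same route as the paper's: reduce via Lemma \ref{ch1} and \eqref{tE} to bounding $\|(Id_\C\otimes\psi)(t^E)\|_{\max}$, then obtain that bound from Lemma \ref{ch2}. The only difference is one of care rather than strategy: the paper invokes (the first part of the proof of) Lemma \ref{ch2} directly with $x=t^E$, written there as if $t^E$ were an algebraic tensor in $\C\otimes E$, whereas you apply Lemma \ref{ch2} as stated to an algebraic approximant $x_0\in G\otimes E$ and absorb the error $t^E-x_0$ using your a priori bound $K_0$ on $\|\psi_{|E}\|_{mb}$ --- thereby making explicit the density/approximation point (and the ordering of choices it forces) that the paper's one-line proof leaves implicit.
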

           
            \begin{proof}  
            By Lemma \ref{ch2} (actually just by the first part of the proof of it)  applied with $x=t^E \in \C \otimes E$ 
            we find $\cl E$ such that for any $\vp$-morphism $\psi: \cl E \to D$ we have
            $ \|( Id_{\C} \otimes  \psi)(t^E) \| _{ \C\otimes_{\max} D} \le 1+\d.$ Therefore by Lemma \ref{ch1}
            $ \| \psi_E\|_{mb} \le 1+\d $.
                         \end{proof}

\begin{lem}\label{ch4} 
Consider a separable $C^*$-algebra $C$ 
with LP
 and a $*$-homomorphism $$F : C \to \cl L(C)=
                \ell_\infty(C)/c_0(C)$$
                together with   a 
                  c.c. lifting $(f_n): C \to \ell_\infty(C)$,
                which  is (automatically) asymptotically
                $*$-multiplicative.
                 Let $E\subset C$ be a f.d.s.a. subspace.
           For any $\d>0$ there is $\vp'>0$
           such that for any $m$ large enough
           there is
          a f.d.s.a. subspace $E_1\subset C$ 
          containing $  {f_m}(E)+{f_m}(E){f_m}(E)   $
          so that  if we set $T = {f_m}_{|  E}: E \to E_1$
        %
        %
         then  for any  other $C^*$-algebra $D$ and any
           $\vp'$-morphism $\psi: E_1 \to D$   we have
           \begin{equation}  \label{ju1}  \|\psi T \|_{MB(E,D)} \le 1+\d,\end{equation}
           and $T$ itself is  an $\vp'$-morphism.
           \end{lem}
           
           It will be convenient to set for $T: E \to E_1$
          \begin{equation}  \label{ju10} \|T\|_{mb,\vp} =\sup \|\psi T \|_{MB(E,D)} \end{equation}
           where the sup runs over all $C^*$-algebras $D$ and all $\vp$-morphisms $\psi: E_1 \to D$, so that  \eqref{ju1} could be written as $ \|T\|_{mb,\vp'} \le 1+\d $. Note   that  the case when 
           $\psi : E_1\to C$ acts as the identity yields $ \| T \|_{MB(E,C)} \le \|T\|_{mb,\vp'}$ for any $\vp'>0$.

            \begin{proof}  Let $\cl E$ and $\vp$ be as in Lemma \ref{ch3}. Fix $0<\vp'<\vp$ to be specified.
            Since $(f_m)$ satisfies \eqref{ju2},
            for all $m$   large enough
            ${f_m}_{|\cl E}: \cl E \to C$ is an $\vp'$-morphism. 
            A fortiori so is ${f_m}_{| E}:  E \to C$ and hence $T$.
             We set 
              $E_1= {f_m}(\cl E) + {f_m}(\cl E){f_m}(\cl E)$.
            Note  $T(E)  + T(E)T(E)  \subset E_1$.
            By
           Remark   \ref{rch1}
 for any $D$ and any
           $\vp'$-morphism $\psi: E_1 \to D$,             the composition $\psi'=\psi {f_m} _{|\cl E}$ is
           an $\vp''$-morphism on $\cl E$ where 
           $\vp'' =(1+\vp') \vp' +\vp'(1+\vp')^2$. 
           Thus we can clearly select and fix
           $\vp'\approx \vp/2$ so that $\vp'' \le \vp$ or even, say, $\vp'' =\vp$. Then  Lemma \ref{ch3}
           tells us that $ \|\psi T \|_{MB(E,D)}=\|\psi'_{|   E} \|_{MB(E,D)} \le 1+\d.$
             \end{proof}

Let $(\vp_n)$ and $(\d_n)$ be positive sequences such that
 \begin{equation}  \label{ech0} 
 \sum \vp_n<\infty \text{   and   } \d_n \to 0.
 \end{equation}
From what precedes we see that
to any embedding $C \to \cl L(C)$ we can associate (at least if $C$ has LP)
an inductive system and a resulting $A$ :

\begin{pro}\label{pch2} In the situation of Lemma
\ref{ch4}  for each positive sequence $(\d_n)$
                there is a summable positive sequence  $(\vp_n)$  
               and   an admissible inductive system $(E_n,T_n)$ with a sequence
                $(m(n))$  as before
                satisfying (i)-(iv)'.\\
 Moreover, if $(f_n)$ satisfies \eqref{cond}
then we can find $(E_n)$ and $m(n)$ satisfying
\eqref{ech5} in addition to (i)-(iv)'. 
\end{pro}

\begin{proof} By induction, suppose we have obtained the system up to
$ T_n : E_{n-1} \to E_n$ and $\vp_n$ with $T_n=f_{m(n)} $ restricted to $E_{n-1}$.
We then apply Lemma \ref{ch4} to $E=E_n$ with $\d=\d_{n+1}$ to produce
a number $\vp'$ so that choosing $m$ large enough   we may  set $\vp_{n+1}=\vp'$, $m(n+1)=m$,
$E_{n+1}=E_1$ and $T_{n+1}=  { f_{m(n+1)} }_{|E_n}$. Lemma \ref{ch4}
then implies
$$\|  T_{n+1}  \|_{mb,\vp_{n+1}} \le 1+\d_{n+1}.$$
As for condition (iii) since the inequality is strict we can clearly adjust further
$\vp_{n+1}<\vp'$ so that the condition remains valid.
Thus we obtain (i)-(iv)' for the next level $n+1$, which proves the
first part. 
If in addition $(f_n)$ satisfies \eqref{cond}, 
we can choose $m(n+1)$ large enough so that
 $\|{ f^{-1}_{m(n+1)} }_{|f_{m(n+1)}( E_n)} \|_{cb} <1+ \vp_{n+1}$,
 and this gives us \eqref{ech5} at the next induction step.
\end{proof}

\begin{rem}\label{rpch2'} 
We will use (iii) and (iv)' together as follows.
Consider $\t_n : E_n \to Z$. By Lemma \ref{rch1'} we know that  $\t_n$ is a $\d$-morphism
for $\d =e^2 \sum_{j>n} \vp_j$, and hence by (iii) an $\vp_n$-morphism.
By (iv)'   we have
$\|   \t_n T_n  \|_{MB(E_{n-1},Z)} \le 1+ \d_n. $
We claim that this implies
\begin{equation}  \label{ju6-} \forall n\ge 1 \quad \|\t_{n-1}\|_{MB(E_{n-1},Z)} \le 1+\d_n 
\end{equation} 
and hence also 
\begin{equation}  \label{ju6} \forall n\ge 0\quad \|\t_n\|_{MB(E_{n},Z)} \le 1+\d_{n+1} .\end{equation} 
Indeed, since $\forall x\in E_{n-1}$ 
$$\t_{n-1}(x)=(0,\cdots,0, x, w_n(T_n(x)) )=(0,\cdots,0, x, 0,\cdots)+\t_n(T_n(x)) ) $$ (recall \eqref{ju3} and \eqref{ju4})
we have
\begin{equation}  \label{ju5}
\Phi_n \t_{n-1}(x) =(0,\cdots,0,x) \oplus p_{[n,\infty]} \t_n T_n  (x)\end{equation} 
By Lemma \ref{deco} we have
 $\|\t_{n-1}\|_{MB(E_{n-1},Z)} \le \max\{ 1, \| p_{[n,\infty]}\t_n T_n \|_{MB(E_{n-1},   p_{[n,\infty]}(Z))} \}$,
 and hence
$$\|\t_{n-1}\|_{MB(E_{n-1},Z)} \le \max\{ 1, \| \t_n T_n \|_{MB(E_{n-1},   Z)} \}.$$
Whence our claim $\|\t_{n-1}\|_{MB(E_{n-1},Z)} \le 1+\d_n$ for all $n\ge 1$.
\end{rem}

Following the terminology of \cite{154} we say
that the inclusion of a subalgebra $Z$  in a larger one $  L$
is $\max$-injective if the associated $*$-homomorphism
$D\otimes_{\max } Z \to D\otimes_{\max }   L$ is injective (and hence isometric) for all $C^*$-algebras $D$.
For that to hold it suffices that it holds for $D=\C$.

  \begin{pro}\label{pch2'} Consider the situation of Proposition \ref{pch2}.
Assume $(\d_n)$ bounded. Then the associated $C^*$-algebra $Z$ has the LP and the inclusion $Z\to \ell_\infty(C)$ is $\max$-injective.
\end{pro}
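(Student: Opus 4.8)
The plan is to prove the two assertions separately, establishing max-injectivity first since it feeds directly into the lifting property. By the remark preceding the statement it suffices to check max-injectivity for $D=\C$, i.e. to show that the inclusion $Z\subset L=\ell_\infty(C)$ induces an \emph{isometry} $\C\otimes_{\max}Z\to\C\otimes_{\max}L$; the map being automatically contractive, only $\|\xi\|_{\C\otimes_{\max}Z}\le\|\xi\|_{\C\otimes_{\max}L}$ needs proof, and by density I may take $\xi\in\C\otimes Z_n$ for some $n$. The key device is to estimate $\xi$ \emph{at a high level} $m\ge n$. Viewing $\xi\in\C\otimes Z_m$ and applying $\Phi_m$ of Lemma \ref{deco}, one has $\C\otimes_{\max}Z\cong[\bigoplus_{k<m}\C\otimes_{\max}C]\oplus(\C\otimes_{\max}p_{[m,\infty]}(Z))$ and the analogue for $L$ with $p_{[m,\infty]}(Z)$ replaced by $\ell_\infty(\{C_k:k\ge m\})$, the inclusion respecting this splitting and acting as the identity on the first $m$ summands. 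On those summands it is isometric; on the tail summand the image of $\xi$ is $[Id_\C\otimes w_m](\eta)$ with $\eta=\sum_i c_i\otimes T_{[m,n+1]}x_i\in\C\otimes E_m$, and here two estimates combine. The coordinate $*$-homomorphism $\pi_m$ (evaluation at index $m$) satisfies $\pi_m w_m=Id_{E_m}$, giving $\|\eta\|_{\C\otimes_{\max}E_m}\le\|[Id\otimes w_m]\eta\|_{\C\otimes_{\max}\ell_\infty(\{C_k:k\ge m\})}$; while, since under $\Phi_m$ the map $\t_m$ corresponds to $w_m$ into $p_{[m,\infty]}(Z)$, the estimate \eqref{ju6} reads $\|[Id\otimes w_m]\eta\|_{\C\otimes_{\max}p_{[m,\infty]}(Z)}\le(1+\d_{m+1})\|\eta\|_{\C\otimes_{\max}E_m}$. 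Recombining the two summands yields $\|\xi\|_{\C\otimes_{\max}Z}\le(1+\d_{m+1})\|\xi\|_{\C\otimes_{\max}L}$ for every $m\ge n$; since $\d_m\to0$ (which holds in the situation of Proposition \ref{pch2} by \eqref{ech0}), letting $m\to\infty$ gives the isometry.

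For the lifting property I would use the characterization (Th.\ 10.5 of \cite{157}, already invoked in the proof of Lemma \ref{ch1}) that a $C^*$-algebra has the LP iff each of its f.d.\ subspaces is max-controllable, i.e.\ carries a universal element as in Proposition \ref{ppchy}. Given a f.d.\ $E\subset Z$, perturb so that $E\subset Z_n$ and enlarge it to a subspace of \emph{product form}: setting $X_1=P_{<n}\Phi_n(E)\subset\bigoplus_{j<n}C$ and $X_2=P_{\ge n}\Phi_n(E)\subset w_n(E_n)$, put $G:=\Phi_n^{-1}(X_1\oplus X_2)\supset E$, so that $\Phi_n(G)=X_1\oplus X_2$ is a genuine direct sum. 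Since $\Phi_n$ is a $*$-isomorphism, max-controllability of $G$ reduces to that of the two factors. The first factor lies in the finite direct sum $\bigoplus_{j<n}C$, which has the LP (finite sums of LP algebras do), hence is max-controllable. For the second I would write $X_2=w_n(T)$ with $T\subset E_n\subset C$; as $C$ has the LP, $T$ carries a genuine $t^T\in\C\otimes_{\max}T$, and transporting it through $w_n$—max-almost-isometric with constant $1+\d_{n+1}$ by exactly the two inequalities used above—produces a universal element for $X_2$ valid up to the factor $1+\d_{n+1}$. I would then amalgamate the two universal elements into one for $X_1\oplus X_2$ using $\F_\infty=\F_\infty\ast\F_\infty$ to run the two defining $*$-homomorphisms $\C\to\C$ on disjoint sets of free generators.

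The hard part will be the passage from these \emph{approximate} controllable elements to the exact max-controllability demanded by the characterization, together with keeping the construction coherent so that the local data glue to a global lifting. The product-form subspaces $G$ are cofinal, and their controllability constants $1+\d_{n+1}$ tend to $1$ (again using $\d_n\to0$), so a compactness/perturbation argument on the unit balls of the relevant spaces $\C\otimes_{\max}(\cdot)$ should upgrade ``controllable up to $1+\vp$ for every $\vp$'' to exact controllability of each f.d.\ subspace of $Z$, whence the LP. The two points I expect to require care are (i) choosing the product-form subspaces increasing and with dense union while matching the levels $n$, and (ii) making the transported and amalgamated elements compatible under these inclusions, so that the limiting element is genuinely universal rather than merely approximately so.
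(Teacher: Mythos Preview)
Your max-injectivity argument is correct and essentially coincides with the paper's: both decompose via $\Phi_m$ (Lemma \ref{deco}), control the finite block trivially, and control the tail through the estimate \eqref{ju6}. The paper stays at the fixed level $n$ with $\xi\in Z_{n-1}$ and invokes the $C^*$-norm trick (a $*$-homomorphism between $C^*$-algebras that is bounded below is isometric) to pass from the constant $1+\d_n$ to $1$; you instead push $\xi$ to level $m\to\infty$ and use $\d_m\to 0$. Note that the Proposition itself only assumes $(\d_n)$ \emph{bounded}; your appeal to \eqref{ech0} is extraneous, and the $C^*$-norm trick is the clean way to close this (as the paper does).

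For the LP, your route diverges from the paper and the gap you flag as ``the hard part'' is genuine. You obtain, for each f.d.\ $G\subset Z$ of product form at level $n$, a universal element $t^G\in\C\otimes G$ of norm $\le 1+\d_{n+1}$; the amalgamation via $\F_\infty*\F_\infty=\F_\infty$ is fine, and so is the reduction to the two factors. But the characterization you invoke (every f.d.\ subspace is max-controllable) demands $\|t^G\|_{\max}\le 1$ exactly. Pushing a fixed $E$ to higher levels $m$ produces a \emph{sequence} of approximate universal elements living in \emph{different} superspaces $G^{(m)}$, with no evident limit in $\C\otimes_{\max} E$; your proposed ``compactness/perturbation'' argument has no obvious candidate, since $\C\otimes_{\max}E$ is infinite dimensional and the $t^{G^{(m)}}$ need not lie in $\C\otimes E$ at all. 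The normalization $t^{G^{(m)}}/(1+\d_{m+1})$ gives elements whose $\pi$-orbit covers only the ball of radius $(1+\d_{m+1})^{-1}$, and stitching these together is not automatic.

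The paper sidesteps this entirely by using a different (equivalent) criterion from \cite{157}: $Z$ has LP iff for every family $(C_i)$ the canonical map $Z\otimes_{\max}\ell_\infty(\{C_i\})\to\ell_\infty(\{Z\otimes_{\max}C_i\})$ is isometric. One checks this on $Z_{n-1}\otimes\ell_\infty(\{C_i\})$, again via $\Phi_n$: the finite block uses that $C\oplus\cdots\oplus C$ has LP, and the tail uses \eqref{ju6-} together with the LP of $C$ to get the bound $1+\d_n$. Since both sides are $C^*$-norms on the same $*$-algebra, any uniform constant forces equality; so boundedness of $(\d_n)$ suffices and no limiting or compactness argument is needed. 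If you wish to salvage your approach, the cleanest fix is to observe that your approximate universal elements already yield this $\ell_\infty$-criterion with the constant $1+\d_{n+1}$, and then invoke the $C^*$-norm trick --- but at that point you have reproduced the paper's argument.
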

\begin{proof}
To show that $Z$ has the LP we use the criterion from \cite{157}.
Let $(C_i)$ be an arbitrary family of $C^*$-algebras (we could take simply $C_i=\C$ for all $i$).
We claim that 
there is a constant $c$ such that for any $t\in Z \otimes \ell_\infty(\{C_i\}) $
the associated family $(t_i)$ with $t_i\in Z \otimes C_i $ satisfies
\begin{equation}  \label{jr} \|t\|_{Z \otimes_{\max} \ell_\infty(\{C_i\})} \le c \sup\nl_i\|t_i\|_{Z \otimes_{\max} C_i}  .
\end{equation} 
Actually, since here we are comparing $C^*$-norms if there is such a $c$ then $c=1$ works, and we obtain the criterion as in \cite{157}.
To verify this claim it suffices to check this
for $t\in Z_{n} \otimes \ell_\infty(\{C_i\}) $ with $c$ independent of $n$.
The following argument, simpler than the original one, is due to Jean Roydor.
The claim is an easy consequence of the fact that there is a linear isomorphism
$v_n: E_n \to Z_n$ (defined by $v_n(x)=\t_n(x)$) such that (recall \eqref{jr1})
\begin{equation}  \label{jr2}\|v_n\|_{MB(E_n,Z_n)} \|v^{-1}_n\|_{MB(Z_n,E_n)}\le 1+\d_{n+1}.\end{equation} 
Indeed, this follows by combining \eqref{ju6} with the observation that
$v^{-1}_n : Z_n \to E_n$ is nothing but the restriction of the $n$-th coordinate   $p_n: \ell_\infty(C) \to C$ which is 
a $*$-homomorphism   from $Z$ to $C$, and hence has mb-norm at most $1$.\\
Now, it is just a simple exercise to deduce that
 \eqref{jr} holds for $t\in Z_{n} \otimes \ell_\infty(\{C_i\}) $ with $c=1+\d_{n+1}$ from the fact that the analogous inequality holds
 with $c=1$ for all $t\in E_{n} \otimes \ell_\infty(\{C_i\}) $ (the latter   since $C$ has the LP).
 
 Recall the notation $L=\ell_\infty(C)$.  To show that $Z\subset L$ is $\max$-injective, let
$t\in \C \otimes Z$.
We will show that $\|t\|_{\C \otimes_{\max} Z}\le c \|t\|_{\C \otimes_{\max} L} $ for some constant $c$.
As in the first part we must have $c=1$ since both sides of the inequality are $C^*$-norms.
By perturbation we may clearly assume that
$t\in \C \otimes Z_{n}$. Then since $x= v_n p_n x$  for any $x\in Z_n$ we have by \eqref{ju6} again
$$\|t\|_{\C \otimes_{\max} Z_n} \le \|Id \otimes v_n\|_{MB(E_n,Z_n)} \| Id  \otimes p_n (t)  \|_{\C \otimes E_{n}} 
\le 
(1+\d_{n+1}) \|t\|_{\C \otimes_{\max} L}    
 $$
 where at the last step we used that $\|p_n\|_{MB(L,C)} \le 1$.
 This yields the announced result with $c=1+\sup_n   \d_{n}$.
 \end{proof}
\begin{rem}  As pointed out to me by Jean Roydor  the preceding argument highlights
the following general fact : if a separable $C^*$-algebra $Z$   locally  embeds  
in the mb-sense described by \eqref{jr2} into another one with the LP
then $Z$ inherits the LP.
Thus we may state: a separable $Z$ has the LP if and only if it embeds  in $\C$ 
locally in the mb-sense (meaning the same as in Definition \ref{jr4} but with mb in place of cb).
This is similar to the equivalence observed in \cite{161}   of the mb-versions
of the LLP and the LP.  Note that the terms are slightly misleading: it is not clear that
a $C^*$-subalgebra locally  embeds  
in the mb-sense, unless the inclusion is max-injective.
\end{rem}
 
\begin{pro} \label{pch3} Consider the situation of Proposition \ref{pch2}.
Assume $\d_n $ bounded. 
Assume in addition
that the   inductive system satisfies \eqref{ech5}, 
then the associated $C^*$-algebra $A$ has the LLP.
\end{pro}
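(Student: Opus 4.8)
The plan is to verify Kirchberg's criterion recalled in the Introduction: $A$ has the LLP if and only if $Id_A$ is $\B$-nuclear (with $\B=B(\ell_2)$), i.e. the canonical surjection $\B\otimes_{\max}A\to\B\otimes_{\min}A$ is isometric, equivalently $\|s\|_{\B\otimes_{\max}A}=\|s\|_{\B\otimes_{\min}A}$ for every $s$ in the algebraic tensor product $\B\otimes A$. Since $A=\ovl{\cup A_n}$, the subspace $\B\otimes(\cup A_n)$ is dense in $\B\otimes A$ for both the minimal and maximal norms, so it suffices to produce a single constant $c$, independent of $n$, with $\|t\|_{\B\otimes_{\max}A}\le c\,\|t\|_{\B\otimes_{\min}A}$ for every $t\in\B\otimes A_n$: such a bound passes to all of $\B\otimes A$ by density, and the usual $C^*$-iteration (replacing $t$ by $(t^*t)^{2^k}$ and letting $k\to\infty$) then improves $c$ to $1$, which is exactly the equality we want.

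To produce such a $c$ I would factor the inclusion $j_n:A_n\hookrightarrow A$ through $E_n$ and $Z$. Writing $\alpha_n=Q\t_n:E_n\to A_n$ for the isomorphism of Proposition \ref{pch10}, one has $j_n=Q\,\t_n\,\alpha_n^{-1}$, and the argument consists in tensoring with $Id_\B$ and tracking the norm through the three factors. First, \eqref{cho4} gives $\|\alpha_n^{-1}\|_{cb}\le 1+\eta_{n+1}$, so $Id_\B\otimes\alpha_n^{-1}$ maps $\B\otimes_{\min}A_n$ into $\B\otimes_{\min}E_n$ with norm at most $1+\eta_{n+1}$. Second, by Remark \ref{rpch2'} (see \eqref{ju6}) we have $\|\t_n\|_{MB(E_n,Z)}\le 1+\d_{n+1}$, so by \eqref{tE'} applied with $D=\B$ the map $Id_\B\otimes\t_n$ sends $\B\otimes_{\max}E_n$ into $\B\otimes_{\max}Z$ with norm at most $1+\d_{n+1}$. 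Third, $Q:Z\to A$ is a surjective $*$-homomorphism, so $Id_\B\otimes Q:\B\otimes_{\max}Z\to\B\otimes_{\max}A$ is contractive.

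The one point where these estimates do not obviously compose --- and the step I expect to be the crux --- is the passage from the first factor to the second: the first delivers an element of $\B\otimes_{\min}E_n$, whereas the second requires control of its norm in $\B\otimes_{\max}E_n$. This is resolved by the hypothesis that $C$ has the LP: then $C$ has the LLP, hence $\B\otimes_{\min}C=\B\otimes_{\max}C$, and since $E_n$ is a finite-dimensional subspace of $C$ and $\B\otimes_{\max}E_n$ is by definition the closure of $\B\otimes E_n$ inside $\B\otimes_{\max}C$, the min- and max-norms agree on $\B\otimes E_n$. With this identification the three bounds chain together, and because $j_n$ is the inclusion we obtain, for every $t\in\B\otimes A_n$,
\[
\|t\|_{\B\otimes_{\max}A}\le(1+\d_{n+1})(1+\eta_{n+1})\,\|t\|_{\B\otimes_{\min}A}.
\]
As $(\d_n)$ is bounded and $\eta_n\to 0$, the factor $(1+\d_{n+1})(1+\eta_{n+1})$ is majorized by a constant $c$ independent of $n$, exactly as required in the first paragraph; when moreover $\d_n\to 0$ the ratio tends to $1$ and the $C^*$-iteration can be dispensed with.
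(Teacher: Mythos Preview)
Your proof is correct and follows essentially the same route as the paper's: factor the inclusion $A_n\hookrightarrow A$ as $\alpha_n\circ\alpha_n^{-1}$ (the paper absorbs your $Q\t_n$ into a single map $\alpha_n=Q\t_n$), use \eqref{cho4} for the cb-bound on $\alpha_n^{-1}$, use \eqref{ju6} for the mb-bound on $\t_n$ (the paper records this as \eqref{ju9}, with the harmless index shift $\d_n$ in place of your $\d_{n+1}$), and invoke $C$'s LLP to identify $\B\otimes_{\min}E_n$ with $\B\otimes_{\max}E_n$. Your explicit $C^*$-iteration reducing the constant $c$ to $1$ is the standard reason why a bounded comparison of $C^*$-norms forces equality; the paper simply asserts this.
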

\begin{proof} We set $\B=B(\ell_2)$. We claim that there is a constant $c$
for which the inclusion $A_n\to A$
satisfies $$\|\B \otimes_{\min} A_n \to \B \otimes_{\max} A\| \le c\ \  \text{  for all  }n  .$$ This clearly implies that the pair $(\B,A)$ is nuclear, i.e. that $A$ has the LLP. 

As in the proof of Proposition \ref{pch10} let 
$$\a_n=Q\t_n: E_n \to A_n.$$
On the one hand by
 \eqref{cho4} we have 
  \begin{equation}  \label{ju8} \|\a_n^{-1}:A_n \to E_n\|_{cb}\le 1+\eta_{n+1},\end{equation} which means
$$ \| Id \otimes \a_n^{-1} :  \B \otimes_{\min} A_n \to \B \otimes_{\min} E_n\|\le
1+\eta_{n+1}.$$ On the other hand
by \eqref{ju6}   we have
 \begin{equation}  \label{ju9}\|\a_n\|_{mb} \le 1+\d_{n+1},\end{equation}  which implies a fortiori
$$ \| Id \otimes \a_n :  \B \otimes_{\max} E_n \to \B \otimes_{\max} A\|\le
1+\d_{n+1}.$$
Recall   that here  the norm of $\B \otimes_{\max} E_n$ is the one induced by $\B \otimes_{\max} C$. Since 
 $C$ has the LP and a fortiori the LLP we have
 $\B \otimes_{\min} E_n= \B \otimes_{\max} E_n$ isometrically and hence we conclude that the inclusion $i_{A_n} : A_n \to A$ satisfies
 $$ \| Id \otimes i_{A_n} :  \B \otimes_{\min} A_n \to \B \otimes_{\max} A\|\le
(1+\eta_{n+1})(1+\d_n)(1+\d_{n+1}).$$ 
This proves the claim.
\end{proof}

\begin{rem}
 We should emphasize  here a subtle point about \eqref{ju10} and  (iv)' that we already used:
 a priori  $\t_{n}$ is an $\vp_{n}$-morphism from $E_{n}  $ to
 $L$ but its values being inside $Z$ we may view it as 
 an $\vp_{n}$-morphism into $Z$, and  hence (iv)' 
implies
 $\| \t_{n}T_{n} \|_{MB(E_{n-1}, Z)} \le 1+\d_{n}$. Similarly
  \eqref{ju9} means 
  \begin{equation}  \label{cho3'}
    \| \a_n \|_{MB(E_{n}, A)}
  \le 1+\d_{n+1} . \end{equation} 
  In this light the norm in \eqref{ju10} appears much stronger than
  the mb-norm of $T$. Specifically, it dominates the  mb-norm of $T$
  even when $T$ is viewed as acting into the $C^*$-algebra generated by
  its range, which a priori may augment the mb-norm.
 \end{rem}

\begin{thm}\label{ty}
Consider the situation of Proposition \ref{pch3}. Let $D$ be a separable $C^*$-algebra.
Assume $\d_n $ bounded and \eqref{cond}.  If
  the embedding $F : C \to \cl L(C)=
                \ell_\infty(C)/c_0(C)$
  is $D$-nuclear (i.e.
$Id_D \otimes F$ is bounded from $D\otimes_{\min} C $ to $D\otimes_{\max} \cl L(C)$), the   
admissible inductive system can be constructed so that
the  associated $C^*$-algebra $A$  
(as in Proposition \ref{pch1}) is $D$-nuclear  and locally embeds in $C$.
\end{thm}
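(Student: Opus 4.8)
The statement splits into two independent assertions, and I would treat them in turn; the local embedding is essentially free, while the $D$-nuclearity is where the hypothesis on $F$ must be grafted onto the construction of Proposition \ref{pch2}. For the local embedding I would simply invoke Proposition \ref{pch10}. The bound \eqref{ech4} is exactly \eqref{ech50}, and since we assume \eqref{cond}, the last part of Proposition \ref{pch2} lets us choose the $m(n)$ so that \eqref{ech5} holds as well. With \eqref{ech50} and \eqref{ech5} in force, Proposition \ref{pch10} gives $d_{cb}(E_n,A_n)\to1$, and as $\cup A_n$ is dense in $A$ this is precisely the statement that $A$ locally embeds in $C$.

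For the $D$-nuclearity I would mimic the scheme of Proposition \ref{pch3}, replacing $\B$ by $D$. It suffices to find a single $c$ with $\|Id_D\otimes i_{A_n}:D\otimes_{\min}A_n\to D\otimes_{\max}A\|\le c$ for all $n$: since this bounds the identity of $A$ between two $C^*$-norms, the constant improves to $1$ by the argument of Proposition \ref{pch2'}, so that $Id_A$ becomes $D$-nuclear. Factor $i_{A_n}=\alpha_n\alpha_n^{-1}$ with $\alpha_n=Q\t_n$. On the minimal side \eqref{ju8} gives $\|Id_D\otimes\alpha_n^{-1}:D\otimes_{\min}A_n\to D\otimes_{\min}E_n\|\le1+\eta_{n+1}$, and on the maximal side \eqref{ju9} together with \eqref{tE'} gives $\|Id_D\otimes\alpha_n:D\otimes_{\max}E_n\to D\otimes_{\max}A\|\le\|\alpha_n\|_{mb}\le1+\d_n$. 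The entire difficulty is thus concentrated in a \emph{minimal-to-maximal} estimate on $E_n$; in Proposition \ref{pch3} this was automatic, because the LLP of $C$ forces $\B\otimes_{\min}E_n=\B\otimes_{\max}E_n$, whereas here $C$ is not $D$-nuclear and the passage must instead be supplied by $F$. Using the relation $\alpha_{n-1}=\alpha_nT_n$, which is immediate from \eqref{che0}, the task reduces to a \emph{uniform} bound $\|Id_D\otimes T_n:D\otimes_{\min}E_{n-1}\to D\otimes_{\max}C\|\le\kappa'$, for then $\|Id_D\otimes\alpha_{n-1}:D\otimes_{\min}E_{n-1}\to D\otimes_{\max}A\|\le(1+\d_n)\kappa'$ and composing with $\alpha_{n-1}^{-1}$ gives $c=(1+\eta_n)(1+\d_n)\kappa'$.

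Two preliminary reductions toward this bridge are clean. First, $A\subset\cl L$ is max-injective: Proposition \ref{pch2'} gives that $Z\subset\ell_\infty(C)$ is max-injective, and since $A=Z/c_0(C)$ and $\cl L=\ell_\infty(C)/c_0(C)$ share the ideal $c_0(C)$, the exactness of $\otimes_{\max}$ (as used in Lemma \ref{ch2}) makes $D\otimes_{\max}A\to D\otimes_{\max}\cl L$ isometric, so the target norm may be read inside $\cl L$, where $F$ lives. Second, writing $F=Qf$ and composing the quotient $D\otimes_{\max}\cl L\to\ell_\infty(\{D\otimes_{\max}C\})/c_0$ with the hypothesis yields, for $v$ in the algebraic tensor product $D\otimes C$, the coordinatewise bound $\limsup_k\|Id_D\otimes f_k(v)\|_{D\otimes_{\max}C}\le\kappa\|v\|_{D\otimes_{\min}C}$, where $\kappa=\|Id_D\otimes F\|$. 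The tempting move is then to add to the induction the requirement that $m(n)$ be chosen so large that the bridge holds on the fixed space $E_{n-1}$ with $\kappa'\approx\kappa$; each $Id_D\otimes f_k$ is finite-rank in the $E_{n-1}$-variable, hence individually bounded, so on a \emph{finite-dimensional} test set a single large index would be located exactly as in the Ascoli step of Lemma \ref{ch2}.

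The hard part will be precisely that for an infinite-dimensional $D$ the relevant ball, that of $D\otimes_{\min}E_{n-1}$, is not compact: the pointwise $\limsup$ above does not by itself force the operator norm $\|Id_D\otimes f_{m(n)}\|_{\min\to\max}$ down to $\kappa$, and a single coordinate need not inherit the good constant uniformly as $E_n$ grows with $n$. Overcoming this is where the \emph{global} content of the hypothesis—the one constant $\kappa$ controlling $F$ on all of $C$ at once—must be used rather than its coordinatewise shadow. Rather than bridging each $T_n$ separately and risking an accumulation of local constants, I would route $\alpha_n$ through $F$ as a whole: compute the target norm in $\cl L$ via max-injectivity and bound $\|Id_D\otimes\alpha_n:D\otimes_{\min}E_n\to D\otimes_{\max}\cl L\|$ directly in terms of $\kappa$ and of the $mb$-data \eqref{ju9} already at hand, so that the final $c$ depends only on $\kappa$ and not on the growing finite-dimensional pieces $E_n$. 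Pinning down this last factorization is the crux of the argument.
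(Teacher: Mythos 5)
Your setup is correct and you have correctly located the difficulty: the factorization $i_{A_{n-1}}=\a_n T_n \a_{n-1}^{-1}$, the estimates \eqref{ju8} and \eqref{ju9} (i.e.\ \eqref{cho3'}), and the observation that everything reduces to a min-to-max bridge on $E_{n-1}$, which in Proposition \ref{pch3} came for free from the LLP of $C$ but here must come from the $D$-nuclearity of $F$. You even correctly diagnose why the naive move fails: the hypothesis on $F$ only yields the pointwise bound $\limsup_k\|[Id_D\otimes f_k](x)\|_{D\otimes_{\max}C}\le\|x\|_{D\otimes_{\min}C}$ for each \emph{fixed} $x\in D\otimes C$, and when $D$ is infinite dimensional the unit ball of $D\otimes_{\min}E_{n-1}$ is not compact, so no single coordinate $f_{m}$ inherits a good operator-norm bound. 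But at exactly this point your proof stops: the "route $\a_n$ through $F$ as a whole" suggestion is left as an unproven crux, and it faces the very same non-compactness obstruction (indeed, the natural contraction goes $D\otimes_{\max}\cl L\to\cl L(D\otimes_{\max}C)$, which gives a \emph{lower} bound on $\|[Id_D\otimes\a_n](x)\|_{\max}$, not the upper bound you need). So the proposal has a genuine gap, and it is the heart of the theorem.

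The idea you are missing is the one for which the separability of $D$ --- a hypothesis you never use --- is imposed. The paper does not seek a uniform bound on all of $D\otimes_{\min}E_{n-1}$ at step $n$. Instead, fix an increasing sequence of finite-dimensional subspaces $D_n\subset D$ with dense union, and build into the induction the requirement \eqref{che4'}: $\|Id\otimes T_n: D_{n-1}\otimes_{\min}E_{n-1}\to D_{n-1}\otimes_{\max}E_n\|\le 1+\d_n$. This is achievable precisely because $D_{n}\otimes_{\min}E_{n}$ is finite dimensional, so its unit ball \emph{is} compact and a finite-net argument applied to \eqref{che4} produces a suitable $m(n+1)$ --- the Ascoli step you wanted, restored by shrinking the $D$-side to a finite-dimensional piece. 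Combining \eqref{che4'} with \eqref{cho3'} and \eqref{ju8} gives $\|Id\otimes i_{A_{n-1}}: D_{n-1}\otimes_{\min}A_{n-1}\to D_{n-1}\otimes_{\max}A\|\le(1+\eta_n)(1+\d_n)^2$, a bound that at stage $n$ is only asserted on $D_{n-1}\otimes A_{n-1}$; but since the constants are bounded and $\cup_n\,[D_{n-1}\otimes A_{n-1}]$ is dense in $D\otimes_{\min}A$, the min and max norms are equivalent (hence equal) on $D\otimes A$, i.e.\ $A$ is $D$-nuclear. In short: rather than strengthening the bridge to be uniform over all of $D$ (which the hypothesis on $F$ does not supply), one weakens what is demanded at each step and recovers the global statement by density in the limit. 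Your treatment of the local embedding via Proposition \ref{pch10} is fine, and your preliminary max-injectivity reduction, while correct, is not needed once the argument is organized this way.
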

    \begin{proof}  
The underlying idea is that the inclusion
$A_{n-1} \subset A_n$ is ``similar" to the map $T_n: E_{n-1} \to E_{n}$. 
By what precedes we have a factorization of  
$A_{n-1} \subset A_n$ as :

$$A_{n-1} {\buildrel {\alpha^{-1}_{n-1}} \over \longrightarrow  } E_{n-1} {\buildrel {T_n} \over \longrightarrow  }  E_n 
{\buildrel {\a_{n} } \over \longrightarrow  } A_{n}.
 $$
We already saw in \eqref{ju8} and \eqref{ju9}
that $\|{\alpha^{-1}_{n-1}}\|_{cb} \le 1+\eta_n$ and 
 $\| \a_n \|_{mb}
  \le 1+\d_{n+1} .$
  We now turn to the assertion about the $D$-nuclearity of $A$.
 Recall that by the projectivity of the max-tensor product we have a canonical
 contractive $*$-homomorphism
$D\otimes_{\max} \cl L(C)\to  \cl L( D \otimes_{\max} C)$, and hence
for any $x\in D \otimes C$ our assumption on $F$   implies
\begin{equation}  \label{che4}\limsup\nl_n\| [Id \otimes f_n](x) \|_ {D \otimes_{\max} C}  \le
\| [Id_D \otimes F](x) \|_ {D \otimes_{\max} {\cl L} (C)}
\le \|x\|_{D \otimes_{\min} C}.\end{equation}

Let $D_n\subset D$ be an increasing sequence of f.d. subspaces
with dense union.
We indicate how the induction step should be modified to obtain
the assertion in the theorem. 
We ensure at each step
that $T_n :E_{n-1} \to E_{n}$ is such that
\begin{equation}  \label{che4'}\| Id\otimes T_n : D_{n-1} \otimes_{\min} E_{n-1} \to D_{n-1} \otimes_{\max} E_{n} \|\le 1+\d_n.\end{equation}
Suppose we have constructed $E_{n-1},E_n,T_n$. Using a suitable  fine enough finite net 
in the (compact) unit ball of $D_{n} \otimes_{\min} E_{n}$ and using 
\eqref{che4} 
we can find
$m$ (that will be our $m(n+1)$) such that, in addition to the properties already
imposed in the proof of Proposition \ref{pch2}, we have
$$\| Id\otimes f_m : D_{n} \otimes_{\min} E_{n} \to D_{n} \otimes_{\max} C\|\le 1+\d_{n+1}$$
Then we argue as before: we let $T_{n+1}=f_m$
and $E_{n+1}=T_{n+1}(E_n)+T_{n+1}(E_n)T_{n+1}(E_n)$, and we obtain \eqref{che4'}
with $n$ replaced by $n+1$, which is
 the next step of the induction. 
We will now show how  \eqref{che4'} for all $n\ge 1$  implies that $A$ is $D$-nuclear.
Using \eqref{cho3'} and 
\eqref{che4'} we find
$$\|Id \otimes \a_nT_n : D_{n-1} \otimes_{\max} E_{n-1} \to D_{n-1} \otimes_{\max} A_n   \|
\le  (1+\d_{n})(1+\d_{n+1})    . $$
Since
$\|{\alpha^{-1}_{n-1}}\|_{cb} \le 1+\eta_n$, the inclusion map $ i_{A_{n-1}}:  A_{n-1} \to A$, 
which is equal to $\a_nT_n a_{n-1}^{-1} $, satisfies
$\|Id \otimes i_{A_{n-1}} : D_{n-1} \otimes_{\min} A_{n-1} \to D_{n-1} \otimes_{\max} A   \|\le  
(1+\eta_n) (1+\d_{n})(1+\d_{n+1}) ,$
or equivalently
$$\|Id \otimes i_{A_{n-1}} : D_{n-1} \otimes_{\min} A_{n-1} \to D \otimes_{\max} A   \|\le  (1+\eta_n) (1+\d_{n})(1+\d_{n+1}) ,$$
and since $(1+\eta_n) (1+\d_{n})(1+\d_{n+1}) $ is bounded and  $\cup [D_{n-1} \otimes  A_{n-1}]$  dense in
$D \otimes_{\min} A$    we conclude that    $A$ is $D$-nuclear.\\
By Proposition \ref{pch10}  we already know that $A$ locally embeds in $C$.
\end{proof}
\begin{rem}\label{ju14}
In the situation of Theorem \ref{ty} with $\d_n\to 0$,
 the conclusion can be  summarized 
as follows:
\\
The inclusion $A_{n-1} \to A_{n}$ can be factorized
as
$$  A_{n-1} {\buildrel {\alpha^{-1}_{n-1}} \over \longrightarrow  } E_{n-1} {\buildrel {T_n} \over \longrightarrow  }  E_n {\buildrel { \a_n} \over \longrightarrow  } A_{n} $$
where $\|{\alpha^{-1}_{n-1}}\|_{cb} \to 1$, $\|\a_n\|_{mb} \to 1$ and
$\|Id \otimes T_n : D_{n-1} \otimes_{\min} E_{n-1} \to D_{n-1} \otimes_{\max} E_n \|\to 1$ (where for the latter we use the norm induced by $D\otimes_{\max} C$).
\end{rem} 
\begin{rem} \label{ju15}
More generally given a sequence $(D(i))_{i\ge 1} $ of separable $C^*$-algebras
such that $F: C  \to \cl L (C )$  is $D(i)$-nuclear for each $i$, we claim that  
 our system can be adjusted so that $A$ is $D(i)$-nuclear for each $i$.\\
 Indeed, for each $i$,  let $\{D(i)_N\mid N\ge 1\}$ be an increasing sequence of f.d.
 subspaces with union dense  in $D(i)$. 
 Let $\{\D_n\mid n\ge 0\}$ be an enumeration of the collection $\{D(i)_N\mid i\ge 1, N\ge 1\}$ where each element is repeated countably infinitely many times.
 Fix a subspace $\D\in \{\D_n\mid n\ge 0\}$. Then $\D=\D_{n-1}$ for some $n\ge 1$ that
 we can choose as large as we wish. Clearly the inductive argument of the preceding proof can be modified
 to ensure that 
 $$\forall n\ge 1\quad \|Id \otimes i_{A_{n-1}} : \D_{n-1} \otimes_{\min} A_{n-1} \to \D_{n-1} \otimes_{\max} A   \|\le  (1+\eta_n)(1+\d_{n})^2.$$
Thus $\D=\D_{n-1}$ implies
  \begin{equation}\label{ju13} \|Id \otimes i_{A_{n-1}} : \D  \otimes_{\min} A_{n-1} \to \D  \otimes_{\max} A   \|\le  (1+\eta_n)(1+\d_{n})^2.\end{equation}
Let $c=\sup (1+\eta_n)(1+\d_{n})^2.$ Since \eqref{ju13} holds for infinitely many $n$'s
the min and max norms are $c$-equivalent on $\D \otimes[\cup A_n]$, and since the latter holds for any $\D$, for each $i$, the same $c$-equivalence holds on 
$[\cup _N D(i)_N] \otimes [\cup_{n-1} A_{n-1}]$.
Since the latter is dense in $D(i) \otimes_{\min} A$ this proves our claim.

Unfortunately we do not see how to remove the assumption that the family
$(D(i))_{i\ge 1} $  is countable.
\end{rem}

  \begin{rem} In our original approach in \cite{155} it suffices to assume that
          $C$ has the LLP. We then use for each $n\ge 1$ the existence of a c.c. lifting for
          $F_{|E_{n-1}}$ and obtain a non-nuclear WEP $C^*$-algebra $A$ that locally embeds
          in $\C$. Then the LLP of $A$ follows from the general fact that WEP $\Rightarrow$
          LLP for algebras that locally embed in $\C$. 
          A priori the non-separability of $\B$ prevents us from taking $D=\B$ to prove that
          $A$ is $\B$-nuclear i.e. has the LLP, but once we know that $A$ locally embeds
          in $\C$,  it suffices for the LLP of $A$ to know that 
          $A$ is $j$-nuclear where  $j: \C \to \B$ is any embedding and where
          by $A$ is $j$-nuclear we mean that $j\otimes Id_A$ is bounded
          from $\C \otimes_{\min} A \to \B \otimes_{\max} A$ (see the appendix for a proof). With this in mind
          the separability of $\C$ allows us to use the same argument as for   separable $D$'s to check the LLP of $A$.
          \end{rem}

\begin{rem}
As already mentioned  in \cite{155}, we can perform our construction of inductive systems
in a more general setting that we will just briefly sketch here.
We give ourselves a sequence of isometric $*$-homomorphisms
$F(N) : C_N \to \cl L (C_{N+1})$ together with c.c.s.a. liftings
$f(N): C_N \to L (C_{N+1})$ so that if $Q(N+1): L (C_{N+1}) \to  \cl L (C_{N+1})$
denotes the quotient map we have $Q(N+1)f(N)=F(N)$.
Then if we assume all the $C_N$'s have the LP and that   the coordinates of each $f(N)$
are   asymptotically 
                 locally almost completely isometric  (i.e. they satisfy the condition \eqref{cond})
then we can produce an inductive system
with $E_{n-1} \subset C_{n-1}$ and with $T_n=f(n-1)_{m(n)}$ on $E_{n-1}$, and we obtain analogous properties.

\end{rem}

\section{Cone algebras}\label{ca}

Proposition \ref{pch1}
tells us how to associate a $C^*$-algebra $A$ to a $*$-embedding $F: C \to \cl L(C) $, equipped with 
a completely contractive lifting $f: C \to L(C)$.
By Theorem \ref{ty}  if $f$
 satisfies \eqref{cond}, if
  $C$ has the LP and if $F$ is $\C$-nuclear, the algebra $A$ has WEP and LLP. To valuably apply this result, we need to  exhibit   $F$'s with the required properties. Their existence  is not so immediate but    cone algebras
  provide us with  that crucial missing ingredient.

Let  $C_0=C((0,1])$  and  $C_1=C([0,1])$.  For  any  $C^*$-algebra  $A$,
  we  denote  by  $C_0(A)=  C_0  \otimes_{\min}  A$  the  so-called  cone  algebra
 of $A$ and by $C(A)$ its unitization. (Warning: $C(A)\not=C_1  \otimes_{\min}  A$ !)\\
    When  dealing  with  a  bounded mapping  $u:  A  \to  B$  between  $C^*$-algebras
  (or  operator  spaces)  we  will  denote  by  $$u_0:  C_0(A)  \to  C_0(B)$$
  the  bounded map  extending    $Id_{C_0}  \otimes  u$.

   The algebra $C$ considered throughout \S \ref{am}
   will now be replaced by the cone algebra $C_0({\bf C})$ or $C({\bf C})$.
    We use the bold letter ${\bf C}$ for a $C^*$-algebra to avoid a notational confusion.

Let $q: {\bf C} \to B$ be a surjective $*$-homomorphism and let $\I=\ker(q)$.
We will use freely the natural identifications
$$ B\simeq  {\bf C}/\cl I \quad \text{  and  } \quad  C_0(B)\simeq C_0 ({\bf C}/\cl I) \simeq C_0({\bf C})/C_0(\cl I), \quad C(B)\simeq C ({\bf C}/\cl I) \simeq C({\bf C})/C_0(\cl I).$$
Let  $(\sigma_n)$  be  a  quasicentral  approximate  unit  in  $\I$.
Our  construction  is simpler  if   there is such a 
  $(\sigma_n)$   (and hence $(1-\sigma_n)$)  formed  of  projections.
Then  the  mappings  $x\mapsto  (1-\sigma_n ) x$  are  approximatively  multiplicative. In general this does not exist.  However,
if we replace ${\bf C},B,q$ by $C_0({\bf C}),C_0(B),q_0$  then  it does exist
and the construction of the preceding section can be applied.

This trick of passing to cone algebras is
closely   related  to    Kirchberg's  \cite[\S  5]{Kir},
but    we  learnt it from  \cite[Lemma  13.4.4]{BO}.  A similar idea already
appears in \cite[Lemma 10]{CoH} for the suspension algebra in the context of
approximatively  multiplicative families indexed by a continuous parameter in $(0,\infty)$, but our goals seem unrelated.

 \begin{lem}\label{ll7}
            Let $q: {\bf C} \to B$ be a surjective $*$-homomorphism between $C^*$-algebras, with separable kernel $\cl I$.
            There is a pointwise  bounded sequence of   maps
             $\rho_n: C_0\otimes {\bf C} \to C_0\otimes {\bf C}$ that induces in the limit a  $*$-homomorphism
            $$\rho_\infty: C_0({\bf C}) \to \L(C_0({\bf C}) ) $$
          that  vanishes on $C_0(\cl I)$ and for which the
            associated map
            $$  \rho_{[\infty]} : C_0({\bf C})/C_0(\cl I) \to \L(C_0({\bf C}) ) $$
         (which satisfies $\rho_{[\infty]} q_0=\rho_\infty$)   is an isometric $*$-homomorphism.\\
            Moreover, we have $q_0\rho_n(x)=q_0(x)$ for all $x\in C_0\otimes {\bf C} $ and   all $n$.
              \end{lem}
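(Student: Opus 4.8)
The plan is to realize $C_0(B)=C_0({\bf C})/C_0(\cl I)$ inside $\cl L(C_0({\bf C}))$ by a Connes--Higson type asymptotic morphism in which the cone coordinate plays the role of the continuous parameter of a quasicentral approximate unit. Since $\cl I$ is separable, first fix an increasing quasicentral approximate unit $(\sigma_n)$ of $\cl I$ with $0\le\sigma_n\le 1$ and $\|[\sigma_n,a]\|\to 0$ for every $a\in{\bf C}$. For each $n$ choose a partition of unity $\{\phi_0,\dots,\phi_{N_n}\}$ on $[0,1]$ of mesh tending to $0$, with $\phi_i\ge 0$ continuous, $\sum_i\phi_i^2=1$, supports of non-adjacent $\phi_i,\phi_j$ disjoint, and sample points $\lambda_i\in\mathrm{supp}\,\phi_i$; normalize the top one so that $\lambda_{N_n}=1$, $\phi_{N_n}(1)=1$ and $\phi_i(1)=0$ for $i<N_n$. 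Writing $c_i=\phi_i(1-\sigma_n)\in\widetilde{\bf C}$ and viewing $x\in C_0({\bf C})=C_0\otimes{\bf C}$ as a function $s\mapsto x(s)$ on $(0,1]$ with $x(0)=0$, I set
\[
\rho_n(x)(s)=\sum_i c_i\,x(s\lambda_i)\,c_i .
\]
Because $\sum_i c_i^2=1$ and each $x\mapsto x(s\lambda_i)$ is a $*$-homomorphism (evaluation), $\rho_n$ is completely positive and contractive, manifestly self-adjoint (the $\phi_i$ are real), and lands in $C_0({\bf C})$ since $x(s\lambda_i)\to 0$ as $s\to 0$. Applying $q$ pointwise and using $q(1-\sigma_n)=1$ gives $q(c_i)=\phi_i(1)$, so the endpoint normalization yields $q_0\rho_n(x)=q_0(x)$ for every $x$ and $n$.

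The heart of the argument is asymptotic multiplicativity. Here I would expand $\rho_n(x)(s)\rho_n(y)(s)=\sum_{i,j}c_i x(s\lambda_i)c_ic_j y(s\lambda_j)c_j$ and use three exact or approximate facts: functional calculus is multiplicative, so $c_ic_j=(\phi_i\phi_j)(1-\sigma_n)=0$ once $|i-j|\ge 2$; quasicentrality lets me commute the functions $c_ic_j$ of $\sigma_n$ past the coefficients $x(s\lambda_i)$, up to an error governed by $\|[\sigma_n,\,\cdot\,]\|$ on the (relatively compact) pointwise ranges of $x,y$; and the fineness of the partition makes $x(s\lambda_i)\approx x(s\lambda_j)$ on overlapping supports. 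Collapsing the sum with $\sum_i c_i^2=1$ then leaves $\sum_j c_j (xy)(s\lambda_j)c_j=\rho_n(xy)(s)$ up to a term that vanishes as $n\to\infty$, mesh $\to 0$ and commutators $\to 0$ simultaneously. This is exactly where the cone/Connes--Higson smoothing is essential: a bare compression $x\mapsto(1-\sigma_n)^{1/2}x(1-\sigma_n)^{1/2}$ is \emph{not} asymptotically multiplicative, and it is the spreading of the functional calculus of $1-\sigma_n$ along the cone coordinate that repairs this. I expect this estimate -- the bookkeeping of the commutator and mesh errors, including the standard fact that $\|[\phi(\sigma_n),a]\|\to 0$ for continuous $\phi$ whenever $\|[\sigma_n,a]\|\to 0$ -- to be the main obstacle. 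Granting it, together with self-adjointness, $\rho_\infty(x):=Q((\rho_n(x))_n):C_0({\bf C})\to\cl L(C_0({\bf C}))$ is a well-defined $*$-homomorphism.

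Next I would check that $\rho_\infty$ vanishes on $C_0(\cl I)$. For $w\in C_0(\cl I)$ each summand $c_i\,w(s\lambda_i)\,c_i$ is small for one of two complementary reasons: when $\lambda_i$ is near $0$ the cone condition $w(0)=0$ forces $\|w(s\lambda_i)\|$ small, while when $\lambda_i$ is bounded away from $0$ the factor $c_i=\phi_i(1-\sigma_n)$ is supported where $1-\sigma_n$ is bounded below, hence, via $\|(1-\sigma_n)^{1/2}w(t)\|\to 0$ for $w(t)\in\cl I$, it kills $\cl I$ in the limit. Thus $\|\rho_n(w)\|\to 0$, so $\rho_\infty$ annihilates $C_0(\cl I)$ and factors as $\rho_\infty=\rho_{[\infty]}q_0$ with $\rho_{[\infty]}:C_0(B)\to\cl L(C_0({\bf C}))$. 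It is precisely here that the two defining features of the setup -- the cone, to handle small $\lambda_i$, and the approximate unit, to handle large $\lambda_i$ -- must cooperate.

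Finally, $\rho_{[\infty]}$ is isometric. The inequality $\|\rho_n(x)\|\ge\|q_0\rho_n(x)\|=\|q_0(x)\|$ gives $\|\rho_{[\infty]}(q_0x)\|=\limsup_n\|\rho_n(x)\|\ge\|q_0(x)\|$, while $\rho_{[\infty]}$, being a $*$-homomorphism, is automatically contractive; hence $\|\rho_{[\infty]}(q_0x)\|=\|q_0(x)\|$. This completes the construction, with the asymptotic-multiplicativity estimate of the second paragraph as the one genuinely technical step.
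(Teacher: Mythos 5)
There is a genuine gap, and it sits exactly where you flagged it: the asymptotic multiplicativity of your discretized maps. Your construction fixes the quasicentral approximate unit $(\sigma_n)$ \emph{first} and then chooses partitions $\{\phi_i\}$ whose mesh tends to $0$. The error analysis then requires commutator estimates of the form $\|[\,(\phi_i\phi_j)(1-\sigma_n),\,x(s\lambda_i)\,]\|\to 0$, but the functions $\phi_i=\phi_i^{(n)}$ change with $n$ and become steeper as the mesh shrinks. The ``standard fact'' you invoke --- $\|[\phi(\sigma_n),a]\|\to 0$ whenever $\|[\sigma_n,a]\|\to 0$ --- is valid only for a \emph{fixed} continuous $\phi$ (via polynomial approximation, with constants depending on $\phi$); it gives no control when $\phi$ varies with $n$, and indeed any quantitative bound degrades with the Lipschitz norm of $\phi_i^{(n)}$, which blows up like the number of partition points. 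So with your order of choices there is no argument that the errors tend to $0$, and none can be extracted from quasicentrality alone. The repair is to reverse the quantifiers: choose the $n$-th partition first, then choose $\sigma_n$ far enough along the quasicentral net to make the finitely many commutators $[\phi_i^{(n)}(1-\sigma_n),a_k]$ (and the terms $\phi_i^{(n)}(1-\sigma_n)z_k$ needed for vanishing on $C_0(\cl I)$, which suffers the same defect) small against a countable dense set $\{a_k\}\subset{\bf C}$, then extend by density using that your $\rho_n$ are uniformly bounded (they are c.p.c., a genuine advantage of your formula). But this recursion needs ${\bf C}$ separable, which is \emph{not} a hypothesis of the lemma --- only $\cl I$ is assumed separable. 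One should also note that the sum over the $O(N_n)$ overlapping terms must be controlled by an almost-orthogonality argument (splitting indices into finitely many classes with disjoint supports), not termwise, else even fixed-size individual errors accumulate.

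The paper avoids all of this with a simpler formula: $\rho_n$ is the substitution map sending $t\mapsto f(t)c$ to $t\mapsto f(t(1-\sigma_n))c$ (on monomials, $t^m\otimes c\mapsto t^m\otimes(1-\sigma_n)^mc$). For fixed elementary tensors $x=f\otimes c$, $y=g\otimes d$, the multiplicativity error then involves commutators of $c$ with \emph{fixed} polynomials/functions of $\sigma_n$ determined by $x,y$ alone (e.g.\ $\|[c,(1-\sigma_n)^l]\|\le l\|[c,\sigma_n]\|$), so plain quasicentrality kills it --- no partitions, no coupling, no accumulation, no separability of ${\bf C}$, and the case of general $f,g$ follows by uniform polynomial approximation since $\|\rho_n(f\otimes c)\|\le\|f\|\,\|c\|$ uniformly in $n$. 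Your endpoint normalization, the resulting identity $q_0\rho_n=q_0$, and the isometry argument for $\rho_{[\infty]}$ coincide with the paper's; it is only the multiplicative heart of the lemma that your route leaves both unproved and, as literally set up, unprovable without restructuring the choices.
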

           We could refer to the proof of Lemma 6.1 in \cite{155} (itself based on \cite[Lemma 13.4.4]{BO}),
           but we give the details for the reader's convenience.
           
           \begin{proof} Recall $\cl I=\ker(q)$.
            Let $(\sigma_n)$ be a net forming a quasicentral approximate unit of $\cl I$. 
            This means $\sigma_n\ge0$, $\|\sigma_n\| \le 1$,
            $\|\sigma_n x-x\|\to 0$ for any $x\in \cl I$ and
            $\|\sigma_n c-c\sigma_n\|\to 0$ for any $c\in {\bf C}$.
            We identify $C_0({\bf C})=C_0\otimes_{\min} {\bf C}$ with the set of ${\bf C}$ valued functions
            $f: [0,1] \to {\bf C}$ such that $f(0)=0$. 
            The set of polynomials
            $\cl P_0={\rm span}[t^m\mid m>0]$ is dense in $C_0$.
            Let $\rho_n: C_0 \otimes {\bf C} \to C_0({\bf C})$ be the map taking 
            $t\mapsto f(t) c$ ($f\in C_0,c\in {\bf C}$) to
            $t\mapsto f(t(1-\sigma_n)) c$. For instance (monomials)
            $\rho_n$ takes $t\mapsto t^m c$ to
            $t\mapsto t^m (1-\sigma_n)^m c$.
            Note $\|1-\sigma_n\|\le 1$ therefore for any $f\in C_0$ the function
            $t\mapsto f(t(1-\sigma_n))$ is in $C_0({\bf C})$
            with norm $\le \|f\|_{C_0 }$. This shows that
            $\sup_n\|\rho_n(y)\|<\infty$ for any $y\in C_0\otimes {\bf C}$, so that $(\rho_n)$ defines
            a map $\rho: C_0 \otimes {\bf C} \to \ell_\infty(C_0({\bf C}))$.\\
            We should warn the reader that  a priori we do not know
            that $\rho$ is bounded on $C_0\otimes_{\min} {\bf C}=C_0({\bf C})$, but we will show that it becomes contractive after composition with the quotient map
         $Q: \ell_\infty(C_0({\bf C})) \to \cl L(C_0({\bf C}))$.
           
             Since $\sigma_n\in \cl I$
             we have for all $x\in \cl P_0\otimes {\bf C}$ (and hence all $x\in \cl C_0\otimes {\bf C}$)
              \begin{equation}\label{us8}   q_0\rho_n (x) =q_0(x) . \end{equation}
              Indeed,   
            \eqref{us8} clearly holds
            in the case $x=t^m \otimes c$ and hence it holds by density
            for any $x\in C_0\otimes {\bf C}$.

            Since $\sigma_n$ is quasicentral we have
            $$\forall x,y\in C_0\otimes {\bf C}\quad 
            \limsup\nl_n\|
            \rho_n(xy)-\rho_n(x)\rho_n(y)\|=0.$$
            Indeed this reduces to the case of monomials which is obvious,
            and also $$\limsup\nl_n\|\rho_n(x)^*-\rho_n(x^*)\|=0.$$
            It follows that after composing $\rho$ by the quotient map
            $Q: \ell_\infty(C_0({\bf C})) \to \cl L(C_0({\bf C}))$ we obtain
            a  map 
            $\rho_\infty : C_0 \otimes {\bf C} \to \cl L(C_0({\bf C}))$ which
            is a $*$-homomorphism, such that $\rho_\infty=Q\rho$ on $C_0\otimes {\bf C}$.
            Since $C_0$ is nuclear, this  extends to a   $*$-homomorphism
        $\rho_\infty: C_0({\bf C}) \to \cl L(C_0({\bf C})) $  defined on the whole of 
            $C_0({\bf C})$ with $\| \rho_\infty \|\le 1$.
             
     We have
             \begin{equation}\label{us9}\forall f \in C_0\otimes \cl I\quad \limsup\nl_n\|
            \rho_n(f)\|=0 ,\end{equation}
            and hence  $\rho_\infty( C_0\otimes \I)=Q\rho ( C_0\otimes \I)=\{0\}$.
            Therefore,  after passing to the quotient by $C_0 (\cl I)  \subset \ker(\rho_\infty)$, we derive from $\rho_\infty$
            a  (contractive) $*$-homomorphism 
            $$\rho_{[\infty]} : C_0({\bf C})/C_0(\cl I) \to \L(C_0({\bf C}) )  .$$
          
          We have clearly $\rho_{[\infty]} q_0=Q\rho$ on $C_0\otimes {\bf C}$.
          To check that $\rho_{[\infty]}$ is isometric it suffices to show that
          for any $x\in C_0\otimes {\bf C}$ we have 
          $\|\rho_{[\infty]} q_0(x)\|\ge \|q_0(x)\|.$
            By \eqref{us8}
           we have
            \begin{equation}  \label{chu1}
             \forall x\in C_0\otimes {\bf C} \ \forall n\qquad  \|q_0(x)\|  \le \|\rho_n(x)\| .
            \end{equation}
 
             This implies
             $\|q_0(x)\|\le \limsup\|\rho_n(x)\|  = \|Q\rho(x)\|=\|\rho_{[\infty]} q_0(x)\|. $
          Thus    $\rho_{[\infty]}$ is isometric.      \end{proof}

            \begin{rem}\label{rchi}[On unitizations] We start by   some standard
            background.
            Recall that the unitization of  a $C^*$-algebra $A$
            is the unique  unital $C^*$-algebra $\hat A \supset A$ such that any unital 
            $*$-homomorphism $u: A \to B$  into a unital $C^*$-algebra $B$ uniquely
            extends to a unital $*$-homomorphism. The vector space $\hat A$
            is spanned by $A$ and the unit of $\hat A$, and $A$ is an ideal 
            in $\hat A$ such that $\hat A/A\simeq \CC  $. The Banach spaces
            $\hat A$ and $A\oplus \CC$ are isomorphic with equivalent norms. 
            Given a linear map $u: A \to B$ from a $C^*$-algebra $A$ to a unital one, let
   $\hat u : \hat A \to  B$ denote its unital extension to the unitization of $A$.  
   When $u$ is a $*$-homomorphism (resp. an isometric one), so is $\hat u$.\\
            Recall that $C({\bf C})$ (resp. $C(B)$) denotes the unitization of $C_0({\bf C})$
            (resp. $C_0(B)$).
            Using the identifications $C_0(B)\simeq C_0({\bf C})/ C_0(\cl I) \simeq C_0({\bf C}/\cl I)$ we have
            an isometric $*$-homomorphism 
            $$ \rho_{[\infty]} : C_0({\bf C}/\cl I)\to \cl L (C_0({\bf C})).$$
            
            Clearly
            $C_0({\cl I})$ can be viewed as a subalgebra (in fact an ideal) in   $C({\bf C})$.
            Using the characteristic property of unitizations,
            it is easy to verify that $C({\bf C})/ C_0(\cl I)$  can be identified with  $C(B)=C({\bf C}/\cl I)$.
       We have  a canonical isometric embedding 
             $$i_{\cl L}: {\cl L} (C_0({\bf C})) \subset {\cl L} (C({\bf C})),$$ derived from the inclusion
         $C_0({\bf C}) \subset C({\bf C})$.
       Since  
        $\cl L (C({\bf C}))$ is unital
           we have a unital $*$-homomorphism
            $\tilde   \rho_{[\infty]} : C({\bf C}/\cl I) \to \cl L (C({\bf C}))$
           extending 
            $i_{\cl L}   \rho_{[\infty]}$.
           Clearly, $\tilde  \rho_{[\infty]}$ remains isometric.
                      Moreover,   
           denoting 
           $$V=\CC 1_{C({\bf C})} + C_0\otimes {\bf C} \subset C({\bf C})$$
       the map $\rho= (\rho_n): C_0\otimes {\bf C} \to L(C_0({\bf C}))\subset L(C({\bf C}))$  has a unital extension
             $\tilde \rho  : V \to L(C({\bf C}))$
           and $V$     is a dense subspace of $   C({\bf C})$. 
            Let $Q': L( C  ({\bf C})) \to \cl L( C  ({\bf C}))$ denote the quotient map.
            It is then easy to check that $Q'\tilde \rho $
            is the restriction to V of $\tilde \rho_{[\infty]}$.

           All in all this shows that there is a version
           of Lemma \ref{ll7} valid for the unital cone algebras as follows.       
      \end{rem}
            
            \begin{lem}\label{ll7u}
            Let $q: {\bf C} \to B$ be a surjective $*$-homomorphism between $C^*$-algebras ${\bf C} $ and $ B$, with separable kernel $\cl I$.
          Let $i: C_0({\bf C})/C_0(\cl I) \to  C({\bf C})/C_0(\cl I)$ be the natural inclusion. 
          Let $\tilde q_0 :  C({\bf C}) \to  C({\bf C})/C_0(\cl I) $ be the unital map
          associated to $iq_0$.
                      There is a pointwise  bounded sequence of  unital  maps
             $\tilde\rho_n: V \to V$ that induces in the limit a unital $*$-homomorphism
            $$\tilde\rho_\infty: C({\bf C}) \to \L(C({\bf C}) ) $$
          that  vanishes on $C_0(\cl I)$ and for which the
            associated map
            $$ \tilde \rho_{[\infty]} : C({\bf C})/C_0(\cl I) \to \L(C({\bf C}) ) $$
         (which satisfies $\tilde\rho_{[\infty]}\tilde q_0 =\tilde\rho_\infty$)   is a unital isometric $*$-homomorphism.\\
            Moreover, 
            we have $\tilde q_0\tilde \rho_n(x)=\tilde q_0(x)$ for all $x\in V $ and   all $n$.
              \end{lem}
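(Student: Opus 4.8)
The plan is to deduce the unital statement from the non-unital Lemma \ref{ll7} by the unitization procedure already prepared in Remark \ref{rchi}; there is no new analytic content, only a careful transport of the pieces through the unitization. First I would take the maps $\rho_n\colon C_0\otimes {\bf C}\to C_0({\bf C})$ of Lemma \ref{ll7} and define $\tilde\rho_n$ on $V=\CC 1_{C({\bf C})}+C_0\otimes {\bf C}$ by $\tilde\rho_n(\lambda 1+y)=\lambda 1+\rho_n(y)$ for $\lambda\in\CC$, $y\in C_0\otimes{\bf C}$. Each $\tilde\rho_n$ is unital, and the bound $\|\tilde\rho_n(\lambda 1+y)\|\le |\lambda|+\|\rho_n(y)\|$ together with $\sup_n\|\rho_n(y)\|<\infty$ (Lemma \ref{ll7}) shows that $(\tilde\rho_n)$ is pointwise bounded on $V$. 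Viewing $\tilde\rho=(\tilde\rho_n)$ as a map into $L(C({\bf C}))$ via the unital inclusion $i_L\colon L(C_0({\bf C}))\subset L(C({\bf C}))$ and composing with the quotient map $Q'\colon L(C({\bf C}))\to \cl L(C({\bf C}))$, the asymptotic multiplicativity and self-adjointness of $(\rho_n)$ from Lemma \ref{ll7} carry over verbatim to $V$ (the added unit behaving trivially), so $Q'\tilde\rho$ is a unital $*$-homomorphism on the dense $*$-subalgebra $V\subset C({\bf C})$.

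For the limit I would rather work directly with the contractive $*$-homomorphism $\rho_\infty\colon C_0({\bf C})\to \cl L(C_0({\bf C}))$ already furnished by Lemma \ref{ll7}. Since $\cl L(C({\bf C}))$ is unital, the composition $i_{\cl L}\rho_\infty$, regarded as a $*$-homomorphism from the ideal $C_0({\bf C})$ of $C({\bf C})$ into $\cl L(C({\bf C}))$, extends canonically to a unital $*$-homomorphism $\tilde\rho_\infty\colon C({\bf C})\to \cl L(C({\bf C}))$, and one checks that $\tilde\rho_\infty$ restricts to $Q'\tilde\rho$ on $V$. That $\tilde\rho_\infty$ vanishes on $C_0(\cl I)$ is immediate from the corresponding property of $\rho_\infty$, itself a consequence of \eqref{us9}. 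Hence $\tilde\rho_\infty$ factors through $C({\bf C})/C_0(\cl I)=C({\bf C}/\cl I)$, producing a unital $*$-homomorphism $\tilde\rho_{[\infty]}$ with $\tilde\rho_{[\infty]}\tilde q_0=\tilde\rho_\infty$.

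The one substantive point is that $\tilde\rho_{[\infty]}$ is isometric, and here I would invoke the general fact recalled at the start of Remark \ref{rchi}: the unital extension of an isometric $*$-homomorphism is again isometric. Indeed $i_{\cl L}\rho_{[\infty]}\colon C_0({\bf C}/\cl I)\to \cl L(C({\bf C}))$ is isometric, being the composition of the isometric $\rho_{[\infty]}$ of Lemma \ref{ll7} with the isometric inclusion $i_{\cl L}$; its domain $C_0({\bf C}/\cl I)$ is non-unital because $C_0$ is; and $\tilde\rho_{[\infty]}$ is precisely its unital extension to $C({\bf C}/\cl I)$. As no unit of the target can belong to the image of a non-unital subalgebra, this unital extension is injective, hence isometric. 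Finally, the identity $\tilde q_0\tilde\rho_n(x)=\tilde q_0(x)$ for $x\in V$ reduces to $q_0\rho_n=q_0$ on $C_0\otimes{\bf C}$ (equation \eqref{us8}) together with $\tilde q_0(1)=1=\tilde q_0\tilde\rho_n(1)$, so it holds on all of $V$ by linearity. The main, and really the only, obstacle is the isometry of $\tilde\rho_{[\infty]}$; but as just noted this is no genuine difficulty once one observes that the relevant domain is non-unital, and everything else is the formal functoriality of unitization applied to Lemma \ref{ll7}.
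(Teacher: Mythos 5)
Your proof is correct and takes essentially the same route as the paper, whose own justification of Lemma \ref{ll7u} is precisely the unitization discussion in Remark \ref{rchi}: extend $\rho$, $\rho_\infty$ and $\rho_{[\infty]}$ unitally via the embeddings $i_{L}$ and $i_{\cl L}$, identify $C({\bf C})/C_0(\cl I)$ with $C({\bf C}/\cl I)$, and invoke the fact that the unital extension of an isometric $*$-homomorphism is isometric. Your explicit injectivity argument for that last fact (the image of the non-unital algebra $C_0({\bf C}/\cl I)$ cannot contain the unit of the target, so the unital extension has trivial kernel) is exactly the verification the paper leaves implicit in the phrase ``Clearly, $\tilde\rho_{[\infty]}$ remains isometric.''
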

            \begin{rem} Alternatively, we
            could denote the unitization of $\L(C_0({\bf C}) )$ by $\hat \L$ and by $\a: \L(C_0({\bf C}) ) \to\hat \L$
            the natural inclusion. Then  $\tilde \rho_{[\infty]} = \widehat{   \a\rho_{[\infty]} }\widehat{i_{\cl L}} $.
            %
     \end{rem}
            Now let $j: {\bf C} \to B$ be an embedding  so that
            $j_0: C_0({\bf C}) \to C_0(B)$ and $F= \rho_{[\infty]}  j_0$
            are clearly also  embeddings. Let
            $\tilde{j_0} : C({\bf C}) \to C(B)$ be the canonical  extension of $j_0$ to the unitizations
            (again an isometric $*$-homomorphism). Recall   $B={\bf C}/  \cl I$.
             Using the identification
            $$C({\bf C})/ C_0(\cl I) =C(B) $$
            we may set $$\check F= \tilde \rho_{[\infty]} \tilde{j_0}  : C({\bf C}) \to {\cl L} (C({\bf C})).$$
                        Note that $\check F$ is but the canonical unital extension of the map $ i_{{\cl L} }F$. We refer the reader
                        to   the diagrams below.
                     
            \begin{lem}\label{ju18} Let $j$   be as above and let
         $$F : C_0({\bf C}) \to \cl L (C_0({\bf C}))
          \text{  (resp.   } \check F   : C({\bf C}) \to {\cl L} (C({\bf C}))) $$
         be as above.
         Let $f=(f_n)$ be a c.c. lifting for $F$  (resp. $\check F$).
            Then the condition \eqref{cond} holds.
             
                  \end{lem}
            \begin{proof}
            Let $V\subset \C$ be a f.d.  subspace and let
          $\D= \cl P_N \otimes V \subset C_0\otimes {\bf C}$ where 
          $\cl P_N={\rm span}[t,\cdots,t^N]$. Recall
          $jV \subset B ={\bf C}/\cl I$.
          Let $\psi:   jV \to {\bf C}$ be any linear lifting, i.e. we have
          $q\psi j(x)=j (x)$ for any $x\in V$, and hence
          $q_0\psi_0 j_0(x)=j_0(x)$ for any $x\in \D$. Let  
            $g_n={\rho_n\psi_0j_0}_{|\D}: \D \to C_0({\bf C})$ (actually $g_n$ takes values
            in $\cl P_N \otimes {\bf C}$). We first claim that $g_n$ is asymptotic
            to ${f_n}_{|\D}$ by which we mean
            $$\forall x\in \D\qquad \|g_n(x)-f_n(x)\|\to 0 . $$
            Indeed, we have
            $Q(f(x)) =F(x)= \rho_{[\infty]} j_0 (x)$
            and also $Q(g(x))=Q\rho\psi_0j_0(x)= \rho_\infty\psi_0j_0(x) =\rho_{[\infty]} q_0 \psi_0 j_0(x)= \rho_{[\infty]} j_0(x)$.  Thus
            $Q(f(x)) = Q(g(x))$ which is equivalent to our first claim.\\
            Our second claim is that
              $$
               \forall n\qquad \|{g^{-1}_n}_{|g_n(\D)} \|_{cb}\le 1.$$
          Indeed, since (see Lemma \ref{ll7}) $q_0 \rho_n=q_0$ on $C_0\otimes {\bf C}$ we have $q_0 g_n=q_0\psi_0j_0=j_0$, from which we derive
           $\|x\|=\|j_0(x)\|\le \|g_n(x)\|$.
           Equivalently  $
                \|{g^{-1}_n}_{|g_n(\D)} \| \le 1.$
               Similarly since $q_0$ is c.c. and $j_0$  completely isometric
          we obtain our second claim.\\
          To conclude,   an elementary perturbation argument
          shows that the two claims together with the complete contractivity of the $f_n$'s imply
           \begin{equation}  \label{chy4} \limsup\|{f^{-1}_n}_{|f_n(E)} \|_{cb}\le 1.\end{equation}
           Indeed, by \cite[Lemma 2.13.2 p. 69]{P4} for any $\vp>0$  and any $E\subset C_0({\bf C})$ there are 
 ${\D} \subset {\cl P} \otimes {\bf C}$  and a complete isomorphism $h: C_0({\bf C}) \to C_0({\bf C})$ such that
 $h({\D})=E$ and $\|h- Id\|_{cb} <\vp$ and also for convenience $\|h^{-1}- Id\|_{cb}<\vp$.
 Since ${\D}$ is f.d. $\limsup \|(f_n-g_n)_{|{\D}} \|_{cb} = 0$ by the first claim. By the two claims for any $0<\vp<1$ there is
           $N=N_\vp$ such that $\sup\nl_{n\ge N}\|(f_n-g_n)_{|{\D}} \|_{cb} \le \vp$ and also
           $\|{g^{-1}_n}_{|g_n({\D})} \|_{cb}\le (1-\vp)^{-1}$.
 Fix $n\ge N$.
 Let $x\in E$ with $\|x\|=1$. 
 Let $y= h^{-1} (x)\in {\D}$. 
 Then $1-\vp\le \|y\|\le 1+\vp$.
 We have $\|g_n(y)\|\ge (1-\vp)\|y\|   $ and $\|f_n(y)-f_n(x)\| \le \|y-x\| \le \vp $.
 It follows
 $$  \|f_n(x)\| \ge \|f_n(y)\|-\vp \ge \|g_n(y)\|-\vp  \|y\|-\vp\ge (1-\vp) \|y\| -\vp  \|y\|-\vp \ge   (1-2\vp)(1-\vp) -\vp.$$
 This shows that $\|{f^{-1}_n}_{|f_n(E)} \| \le [(1-2\vp)(1-\vp) -\vp]^{-1}$.\\
 An entirely similar reasoning with
 $x$ in the unit sphere of (say) $\B \otimes_{\min}  E$ shows the same upper bound
 for $ \|{f^{-1}_n}_{|f_n(E)} \|_{cb}$. Since this holds for any $n\ge N$ and any $\vp>0$   \eqref{chy4} alias \eqref{cond} follows. This completes the proof for $F$, the case of $\check F$ is identical.
   \end{proof}
   
               The situation is   summarized by the following diagrams.
            
            $$\xymatrix{&  C_0({\bf C})\ar[d]^{q_0}& L(C_0({\bf C}))\ar[d]^{Q} \\
C_0({\bf C})    \ar [r]\ar@/_1pc/[rr]_{F}  \ar [r]\ar@/^3pc/[urr]^{f}&  C_0(B) 
  \ar[r]   &  \cl L(C_0({\bf C}))} \qquad \xymatrix{&  C({\bf C})\ar[d]^{\tilde{q_0}}& L(C({\bf C}))\ar[d]^{Q'} \\
C({\bf C})    \ar [r]\ar@/_1pc/[rr]_{\check F}  \ar [r]\ar@/^3pc/[urr]^{f}&  C(B) 
  \ar[r]   &  \cl L(C({\bf C}))}$$
  
  By Remark \ref{rchi} we also have a unital isometric $*$-homomorphism   $\check F: C({\bf C}) \to \cl L (C({\bf C}))$
  that factors through $C(B)$.\\
          Intuitively, we would like to apply Lemma \ref{ll7} to the case of a surjective $*$-homomorphism 
          $q: C^*(\F) \to \B$ ($\F$ some large enough free group) together with an isometric $*$-homomorphism $j: C^*(\F) \to \B$, and to use the fact that $Id_\B$  is $\C$-nuclear  but
          since we wish to remain in the separable realm, we need to make some adjustments.
          There is clearly an embedding $\C \subset \B$, and it is easy to show that there is a separable $C^*$-subalgebra  $B\subset \B$ 
          with WEP that contains $\C$. Let $j: \C \to B$ be the inclusion map.
          Since $B$ is separable we also have a surjective $*$-homomorphism $q: \C \to B$, whence $F= \rho_{[\infty]}  j_0: C_0(\C) \to \cl L (C_0(\C))$.  
        The passage to the cone algebras is harmless:
        $C_0(\C)$ has the LP and $C_0(B) $ the WEP, i.e.  its identity map is $\C$-nuclear.
        A fortiori the map $F$ is $\C$-nuclear.
        
       Thus we obtain:        
        \begin{thm} \label{ty'}  There is an isometric $*$-homomorphism
        $F : C_0(\C) \to \cl L (C_0(\C))$
        (and a unital one  $\check F: C(\C) \to \cl L (C(\C))$) that is also $\C$-nuclear.
        \end{thm}

   
                \begin{rem}[Unital variant]  
          Let $f=(f_n)$ be a unital c.c.  lifting of $\check F$. Then if we start
          by a unital subspace $E_0$, the construction produces unital algebras $Z$ and $A$ (see Remark \ref{roct}).
           \end{rem}
   
        We are thus in a position to apply Theorem \ref{ty}.
  
        \begin{thm} \label{ty'} In the situation just described
       there is an inductive system associated to the map $F : C_0(\C) \to \cl L (C_0(\C))$
       (resp.   $\check F: C(\C) \to \cl L (C(\C))$)
        for which the associated   (resp.  unital) $C^*$-algebra   $A$ has the WEP and the LLP
        but is not nuclear. Moreover, $A$ locally embeds in $\C$.
        \end{thm}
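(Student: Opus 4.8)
The plan is to check that the data $(C_0(\C),F)$ meets every hypothesis of Theorem~\ref{ty} with $D=\C$, and then to read off the asserted properties from the propositions of \S\ref{am}, the only genuinely new point being the failure of nuclearity. First I would record that $C=C_0(\C)$ is separable and has the LP, that $F=\rho_{[\infty]}j_0$ is an isometric $*$-homomorphism into $\cl L(C_0(\C))$, and hence (by the LP) admits a c.c.\ asymptotically $*$-multiplicative lifting $f=(f_n)$. Lemma~\ref{ju18} says this $f$ satisfies \eqref{cond}, and the paragraph preceding the theorem shows $F$ is $\C$-nuclear, since $C_0(B)=C_0\otimes_{\min}B$ has the WEP and $F$ factors as a min-contraction $\mathrm{Id}\otimes j_0$ into $C_0(B)$ followed by the $\max$-contractive $*$-homomorphism $\rho_{[\infty]}$. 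Fixing $\d_n\to0$, Proposition~\ref{pch2} builds an admissible system $(E_n,T_n)$ with (i)--(iv)$'$ and \eqref{ech5}, and Theorem~\ref{ty} (with $D=\C$) refines it so that $A$ is moreover $\C$-nuclear, i.e.\ has the WEP by Kirchberg's criterion. Since the refinement retains the properties imposed in Proposition~\ref{pch3}, the same $A$ has the LLP by that proposition (here $\B=B(\ell_2)$ is non-separable, so one invokes Proposition~\ref{pch3} rather than Theorem~\ref{ty}, the LP of $C_0(\C)$ entering through $\B\otimes_{\min}E_n=\B\otimes_{\max}E_n$). Finally Proposition~\ref{pch10} gives $d_{cb}(E_n,A_n)\le(1+\eta_{n+1})^2$, so $A$ locally embeds in $C_0(\C)=C_0\otimes_{\min}\C$, which by Remark~\ref{ju17} locally embeds in $\C$; by transitivity of local embedding $A$ locally embeds in $\C$.

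It then remains to prove $A$ is non-nuclear, and this is the only step not already packaged in \S\ref{am}. Since nuclear algebras are exact, I would instead show $A$ is non-exact by producing finite-dimensional subspaces of $A$ with unbounded exactness constants. As $\C=C^*(\F_\infty)$ is non-exact, there are f.d.\ subspaces $G_k\subset\C$ with $\mathrm{ex}(G_k)\to\infty$; taking $h\in C_0$ with $\|h\|_\infty=1$ (e.g.\ $h(t)=t$), the complete isometry $a\mapsto h\otimes a$ carries these into completely isometric copies inside $C=C_0(\C)$. The key is that the induction has room to capture them: at suitable steps I would enlarge the superspace $\cl E$ of Lemma~\ref{ch4} to contain the relevant $h\otimes G_k$ and, using \eqref{cond}, take $m(n+1)$ so large that $f_{m(n+1)}$ is $(1+\vp_{n+1})$-completely isometric on all of $\cl E$. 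Then $E_{n+1}\supset f_{m(n+1)}(\cl E)$ contains a copy of $G_k$ at $d_{cb}\le1+\vp_{n+1}$, so $\sup_n\mathrm{ex}(E_n)=\infty$; transporting through the estimate $d_{cb}(E_n,A_n)\to1$, the spaces $A_n\subset A$ inherit unbounded exactness constants, whence $A$ is non-exact and so non-nuclear.

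The hard part is exactly this last step. The WEP, the LLP and the local embedding into $\C$ are immediate consequences of Theorem~\ref{ty}, Proposition~\ref{pch3} and Remark~\ref{ju17}, but non-nuclearity demands that the finite-dimensional obstructions to exactness of $C_0(\C)$ be reproduced inside $A$. What makes this possible is precisely condition \eqref{cond}: it forces the coordinate maps $f_{m(n)}$ to be asymptotically completely isometric, yielding both \eqref{ech5} and the reproduction estimate $d_{cb}(E_n,A_n)\to1$ of Proposition~\ref{pch10}. One should verify that inserting the $G_k$ into $\cl E$ does not conflict with the finitely many other requirements (i)--(iv)$'$ and \eqref{ech5} at each step; it does not, since all of those are met by choosing $\cl E$ large enough and $m(n+1)$ large enough, which is compatible with the insertion.
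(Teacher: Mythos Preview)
Your proposal is correct, and for the WEP, the LLP, and the local embedding into $\C$ it follows the paper's proof essentially verbatim (Theorem~\ref{ty} with $D=\C$, Proposition~\ref{pch3}, Proposition~\ref{pch10} and Remark~\ref{ju17}).

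The non-nuclearity argument, however, diverges from the paper's. The paper handles it in one stroke: it simply chooses the \emph{initial} space $E_0$ to be the span of $t\otimes U_1,\dots,t\otimes U_d$ (with $U_i$ free generators, $d\ge3$), invokes the known bound $d_{cb}(E_0,S)\ge d/(2\sqrt{d-1})$ for any subspace $S$ of a nuclear $C^*$-algebra, and then uses \eqref{echy5}, namely $d_{cb}(E_0,A_0)\le(1+\eta_1)^2$, together with a small enough choice of $(\vp_n)$ so that $(1+\eta_1)^2<d/(2\sqrt{d-1})$. No tinkering with later induction steps is needed. Your route---reproducing an unbounded family of exactness obstructions $G_k$ inside the $E_n$'s by enlarging $\cl E$ at each step and using \eqref{cond} to keep $f_{m(n+1)}$ nearly completely isometric---is valid and is essentially the ``alternate argument'' the paper outlines in Remark~\ref{rchle} (there one inserts $S_n$ directly into $E_n$ rather than via $f_{m(n+1)}(\cl E)$, which is slightly cleaner). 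Your version buys more: it actually forces $\C$ to locally embed in $A$. The paper's version is more economical: a single quantitative obstruction placed at step~$0$ suffices, with no further bookkeeping.
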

          \begin{proof}
          
          We apply Theorem \ref{ty} 
          with $C=C_0(\C)$ to the map $F= \rho_{[\infty]}  j_0: C_0(\C) \to \cl L (C_0(\C))$
          derived above from the coexistence of $j$ and $q$ (or of $j_0$ and $q_0$) on $\C$.
          By Lemma \ref{ju18}, the condition \eqref{cond} holds. By Proposition \ref{pch10} the algebra $A$ locally embeds in $C_0(\C)$ which itself
          locally embeds in $\C$ (see Remark \ref{ju17}). By Proposition \ref{pch3}
          $A$ has the LLP.
         We use $D=\C$ in Theorem \ref{ty}. Then $A$ is $\C$-nuclear which means it has the WEP.
         
         Let us now show how to make sure that $A$ is {\it not} nuclear.
         For that purpose we choose for $E_0$ a suitable subspace of $C_0(\C)$,
         for instance we can select $E_0$ linearly spanned by
         $(t\otimes U_1,\cdots , t \otimes U_d)$ where $(U_1,\cdots ,  U_d)$ are a $d$-tuple
         of free unitary generators of $\F_\infty$ viewed as unitaries in $\C$ (here $t$ means abusively the function $t\mapsto t$ on $[0,1]$). 
         It is known (see \cite[Th. 21.5, p. 336]{P4}) that for any
         subspace $S$ of a nuclear $C^*$-algebra we have
         $$d_{cb} (E_0, S) \ge d/2\sqrt{d-1}.$$
         When $d\ge 3$           we have $d/2\sqrt{d-1} >1.$
         Thus by \eqref{echy5}
          to show that $A$ is not nuclear with this choice of $E_0$  it suffices to choose
           our sequence $(\vp_n)$  so that $(1+\eta_1)^2 < d/2\sqrt{d-1}$.
          See Remark \ref{rchle} below for an alternate argument. 
   \end{proof}
     
           \begin{rem}[How to ensure that $\C$ locally embeds in $A$]\label{rchle} In the construction of the inductive system we may always enlarge
           $E_n$ at any step and then the next steps must of course be modified
           accordingly. Thus, if $S_n\subset C_0(\C)$ is any sequence of  f.d.s.a. subspaces
           that is dense for the $d_{cb}$ distance in the space of f.d.s.a. subspaces of $C_0(\C)$,
           we may define 
           $E_n= T_n(E_{n-1})+T_n(E_{n-1})T_n(E_{n-1}) +S_{n}$
           for each $n\ge 1$. Then
           \eqref{echy5} shows that we have a subspace $A'_n \subset A_n$
           corresponding to $S_n \subset  E_n$
           such that $d_{cb} (S_n,A'_n)\le (1+\eta_{n+1})^2.$
           If we use a sequence $(S_n)$ 
         where each of the original $S_n$'s
           is repeated infinitely many times, this shows that for the resulting system 
          $C_0(\C)$ and hence  $\C$ itself locally embeds
           in $A$.
        \end{rem}

        \section{Appendix}
        
        \begin{lem} Let $A$ be a $C^*$-algebra that locally embeds in $\C$.
        Then $A$ has the LLP if and only $A$ is $j$-nuclear, 
        where $j: \C \to B$ is any given isometric $*$-homomorphism from $\C$ to a
      $C^*$-algebra $B$ with   WEP.
        \end{lem}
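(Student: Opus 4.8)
The plan is to treat the two implications separately: the forward one is soft, while the reverse one carries all the content and is where the hypothesis that $A$ locally embeds in $\C$ is used.

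For the forward implication, suppose $A$ has the LLP. Since $B$ has the WEP, Kirchberg's fundamental theorem on the coincidence of the minimal and maximal norms for the tensor product of an algebra with the LLP and an algebra with the WEP gives $A\otimes_{\min} B=A\otimes_{\max} B$ isometrically. As $j:\C\to B$ is a $*$-homomorphism, $j\otimes Id_A$ is contractive from $\C\otimes_{\min} A$ to $B\otimes_{\min} A$, and composing with the identity $B\otimes_{\min} A=B\otimes_{\max} A$ shows that $A$ is $j$-nuclear. Note that this direction uses neither \eqref{cond} nor the local embedding.

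For the reverse implication, assume $A$ is $j$-nuclear and locally embeds in $\C$; I want to show that $Id_A$ is $\B$-nuclear, i.e. $\B\otimes_{\min}A=\B\otimes_{\max}A$, where $\B=B(\ell_2)$. It suffices to prove, for every $\vp>0$ and every f.d.\ $E\subset A$, the local inequality $\|Id_\B\otimes \iota_E:\ \B\otimes_{\min}E\to \B\otimes_{\max}A\|\le 1+\vp$, where $\iota_E:E\to A$ is the inclusion; letting $E$ exhaust $A$ then gives the claim. Fix $E,\vp$. By local embedding choose a f.d.\ $F\subset\C$ and a cb-isomorphism $w:E\to F$ with $\|w\|_{cb}\|w^{-1}\|_{cb}\le 1+\vp$, and factor $\iota_E=v\circ w$ with $v=\iota_E w^{-1}:F\to A$, $\|v\|_{cb}\le 1+\vp$. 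Since $\C$ has the LLP, the norm induced on $\B\otimes F$ by $\B\otimes_{\min}\C$ agrees with the one induced by $\B\otimes_{\max}\C$, so the map factors as $\B\otimes_{\min}E\xrightarrow{\,Id\otimes w\,}\B\otimes_{\min}F=\B\otimes_{\max}F\xrightarrow{\,Id\otimes v\,}\B\otimes_{\max}A$, and it is enough to bound the last max-to-max norm $\|Id_\B\otimes v:\ \B\otimes_{\max}F\to\B\otimes_{\max}A\|$ by $1+\vp$.

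The remaining point is to control this max norm, and this is where $j$-nuclearity and the WEP of $B$ enter. First, since $B$ has the WEP it is relatively weakly injective in any superalgebra, so $A\otimes_{\max}B\to A\otimes_{\max}\B$ is isometric; composing with $j$-nuclearity we may replace $j$ by a $*$-homomorphism $j':\C\to\B$ with $\|j'\otimes Id_A:\C\otimes_{\min}A\to\B\otimes_{\max}A\|\le 1$. Now $F\subset\C$ and $\C$ has the LP, so by Proposition \ref{ppchy} and \eqref{tE} the relevant max quantity is detected on the single universal element $t^F$: pushing $t^F$ through $j'$ and applying $j'$-nuclearity bounds $\|(Id_\B\otimes v)(j'\otimes Id_F)(t^F)\|_{\B\otimes_{\max}A}$ by $\|(Id_\C\otimes v)(t^F)\|_{\C\otimes_{\min}A}\le\|v\|_{cb}\,\|t^F\|_{\C\otimes_{\max}F}\le 1+\vp$.

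The main obstacle is that this controls only the part of the left leg lying in the \emph{proper} subalgebra $j'(\C)\subsetneq\B$, whereas the LLP demands the full leg $\B=B(\ell_2)$; one cannot reach it by enlarging $j'(\C)$. The device for crossing this gap is the injectivity of $\B$. Testing the target norm against a commuting pair of representations $\pi:\B\to B(K)$, $\sigma:A\to B(K)$, the composite $\sigma v:F\to\pi(\B)'\subset B(K)$ is a complete contraction into the commutant, which is again injective; I would extend it off $F$ and, using $j'$-nuclearity to supply the missing multiplicative structure, dilate it to an honest $*$-representation of $\C$ commuting with $\pi$. Once an honest commuting pair of representations of $\C$ and $\B$ is in hand, the LLP of $\C$ — i.e. $\C\otimes_{\min}\B=\C\otimes_{\max}\B$ — converts the commuting-representation (maximal) estimate into a minimal one, and together with the min-side transfer of the previous step this yields the bound by $1+\vp$. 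The delicate step, which I expect to require the most care, is precisely this passage from the cb-level local embedding together with the $*$-homomorphism-level $j'$-nuclearity to an \emph{honest} commuting pair of representations: local embedding in $\C$ alone cannot suffice, since it holds for many $C^*$-algebras lacking the LLP, so $j'$-nuclearity must be used in an essential way here.
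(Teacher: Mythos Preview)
Your forward implication is fine. The reverse implication has a genuine gap exactly where you identify it, and the repair you sketch does not close it.

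You need $\|Id_\B\otimes v:\B\otimes_{\max}F\to\B\otimes_{\max}A\|\le 1+\vp$ for $v:F\to A$ with $F\subset\C$ and $\|v\|_{cb}\le 1+\vp$. By \eqref{tE'} this would follow from $\|v\|_{mb}\le 1+\vp$, but there is no general mechanism producing an mb-bound from a cb-bound. Your $j'$-nuclearity controls only $(j'\otimes v)(t^F)$, i.e.\ tensors whose left leg lies in $j'(\C)\subsetneq\B$. The proposed fix --- extend $\sigma v:F\to\pi(\B)'$ to all of $\C$ by injectivity of the commutant and then ``dilate to an honest $*$-representation of $\C$ commuting with $\pi$'' --- does not go through: a c.c.\ extension off a f.d.\ subspace is merely linear, not multiplicative, and nothing in the hypothesis of $j'$-nuclearity supplies the missing multiplicativity, so you never obtain the commuting pair of $*$-representations on which to invoke $\C\otimes_{\min}\B=\C\otimes_{\max}\B$. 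You are right that local embedding alone cannot suffice, but you have not found a way to make $j$-nuclearity do the work on the $\B$-leg.

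The paper's argument bypasses $\B$ entirely by verifying the LLP through its \emph{definition} (local liftability of c.p.c.c.\ maps into quotients) and by passing to the dual. Given a c.p.c.c.\ $u:A\to C/\cl I$ and f.d.\ $E\subset A$, let $t\in E^*\otimes A$ correspond to the inclusion $E\hookrightarrow A$, so $\|t\|_{\min}=1$. The key fact is that local embeddability in $\C$ is stable under duality (\cite[Lem.~21.10]{P4}): since $E$ locally embeds in $\C$, so does $E^*$, yielding a c.i.\ $k:E^*\to\C$. Now $j$-nuclearity applies directly with $\C$ on the left, giving $\|(jk\otimes u)(t)\|_{B\otimes_{\max}C/\cl I}\le 1$. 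Projectivity of the max tensor product lifts this to $s\in jk(E^*)\otimes C$ with $\|s\|_{B\otimes_{\max}C}\le 1$, hence $\|s\|_{\min}\le 1$, and since $jk$ is c.i.\ one pulls $s$ back to $\hat t\in E^*\otimes_{\min}C$ of norm at most $1$, which encodes a c.c.\ lifting of $u_{|E}$. The two ideas your attempt is missing are the duality move (embed $E^*$, not $E$, in $\C$) and the direct verification of local liftability; together they sidestep the max-to-max estimate that blocks your route.
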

        \begin{proof} The only if part is obvious. To show the if part, assume that
        $A$ is $j$-nuclear.
Let $u: A \to  C/\cl I$ be a c.p.c.c. map.
Since $Id_A$ is $j$-nuclear, the map $j \otimes u$
is a contraction from $\C \otimes_{\min} A \to B \otimes_{\max}C/\cl I$.
Let $E\subset A$ be a f.d. subspace. 
        Let $t\in E^*\otimes A$
        be the tensor associated to the inclusion map $E\to A$. 
        We have $\|t\|_{\min}=1$.         Recall that $E^*$ always locally embeds in $\C$ when $E$ does (see \cite[Lem. 21.10 p. 339]{P4}).
        Therefore there is   a c.i. embedding 
        $k: E^*\to \C $. We have $[k \otimes Id_A ](t) \in  \C \otimes A$ with $\|[k \otimes Id_A] (t)\|_{\min} =1$.
After composing with $j$, we find 
        $\|[jk \otimes Id_A](t) \|_{\max} \le 1$, and since c.p.c.c. maps between $C^*$-algebras preserve the max norm,
        we have $\|[jk \otimes u](t) \|_{\max} \le 1$.
        Here $t'=[jk \otimes u](t)  \in jk(E^*) \otimes_{\max} C/\cl I \subset  B\otimes_{\max} C/\cl I$.
        By the projectivity of the max-tensor product,
      there is an element $s$ in the unit ball of $ jk(E^*) \otimes_{\max} C \subset B \otimes_{\max} C$ lifting $t'$, meaning,
   denoting by $q: C \to C/\cl I$ the quotient map,
   that $t'=[Id \otimes q](s)$. A fortiori $\|s\|_{\min} \le 1$.
   Since $jk$ is c.i. we recover from $s$ an element $\hat t$  in the unit ball of $E^* \otimes_{\min} C $, such that $[Id \otimes q] (\hat t)
=t$. Let $v: E \to C$ be the linear map associated to $\hat t$.
Then $v$ lifts the inclusion $u_{|E}$ and $\|v\|_{cb}=1$.
This yields the LLP for $A$.
 \end{proof}
 
  \medskip
    
    \n\textbf{Acknowledgement.} I am grateful to Jean Roydor
    for useful conversations and the simpler proof of Proposition \ref{pch2'}.

\end{document}